\def\Chip{{\rm Chip}}
\def\per{{\rm per}}
\newcommand{\cl}[1]{\mbox{\ensuremath{\mathbf{#1}}}\xspace}
\def\NP{\cl{NP}}
\def\coNP{\cl{co}-\cl{NP}}
\newenvironment{bekezdes}[1]{\vspace{0.5cm} {\textbf{#1} }}{\vspace{0.5cm}}
\theoremstyle{plain}
\newtheorem{thm}{Theorem}
\newtheorem{lemma}[thm]{Lemma}
\newtheorem{prop}[thm]{Proposition}
\newtheorem*{claim*}{Claim}
\theoremstyle{definition}
\newtheorem{defn}[thm]{Definition}
\newtheorem{exmp}[thm]{Example}
\newtheorem{prob}[thm]{Problem}
\theoremstyle{remark}
\newtheorem{remark}[thm]{Remark}
\title{On the complexity of the chip-firing reachability problem}
\author{B\'alint Hujter}
\address{MTA-ELTE  Egerv\'ary  Research  Group, 
Department  of  Operations  Research, 
E\"otv\"os  Lor\'and University,
Budapest, Hungary.}
\email{hujterb@cs.elte.hu}
\thanks{Supported  by  the  Hungarian  Scientific  Research  Fund  -  OTKA 
K109240.}
\author{Viktor Kiss}
\address{Department  of  Analysis,
E\"otv\"os  Lor\'and University,
Budapest, Hungary.}
\email{kivi@cs.elte.hu}
\thanks{Supported  by  the  Hungarian  Scientific  Research  Fund  -  OTKA 
104178, 113047.}
\author{Lilla T\'othm\'er\'esz}
\address{MTA-ELTE  Egerv\'ary  Research  Group,
Department  of Computer Science,
E\"otv\"os  Lor\'and University,
Budapest, Hungary.}
\email{tmlilla@cs.elte.hu}
\thanks{Supported  by  the  Hungarian  Scientific Research  Fund  -  OTKA 
K109240.}
\begin{document}

\date{}

\begin{abstract}
In this paper, we study the complexity of the chip-firing reachability problem.
We show that for Eulerian digraphs, the reachability problem can be decided in strongly polynomial time, even if the digraph has multiple edges. 
We also show a special 
case when the reachability problem can be decided in polynomial time for general digraphs: if the target distribution is recurrent restricted to each strongly connected component. 
As a further positive result, we show that the chip-firing reachability problem is in \coNP for general digraphs. 
We also show that the chip-firing halting problem is in \coNP for Eulerian digraphs.
\end{abstract}

\maketitle

\renewcommand{\thefootnote}{}

\footnote{2010 \emph{Mathematics Subject Classification}: 05C57, 05C50, 68Q25.}

\footnote{\emph{Key words and phrases}: chip-firing game; computational 
complexity.}

\renewcommand{\thefootnote}{\arabic{footnote}}
\setcounter{footnote}{0}

\section{Introduction}
\label{s:intro}

Chip-firing is a solitary game on a directed graph, defined by Bj\"orner, Lov\'asz and Shor \cite{BLS91}.  
Each vertex contains a pile of chips. A legal move is to choose a vertex with at least as many chips as its out-degree and let it send  a chip along each outgoing edge.
We analyze the complexity of the following reachability question: given two chip-distributions $x$ and $y$, decide whether $y$ can be reached from $x$ by playing a legal game. 
This question is a special case of the reachability problem for integral vector addition systems \cite{BL92}. It was first considered by Bj\"orner and Lov\'asz, who gave an algorithm that decides the reachability question and runs in weakly polynomial time for simple Eulerian digraphs \cite{BL92}.
The complexity of the reachability problem was left open for Eulerian digraphs with multiple edges, and for non-Eulerian digraphs. The question whether the reachability problem is in \NP or in \coNP was also left open. (Informally: whether there exists a short certificate for the reachability or for the non-reachability.)

In this paper, we show that the chip-firing reachability problem can be decided in polynomial time for Eulerian digraphs with multiple edges. 
Our algorithm is strongly polynomial.
The main ingredient of the algorithm is a lemma ensuring that if one chip-distribution is reachable from another, then it can be reached by firing an ``ascending chain of sets of vertices''.

For general digraphs, we show that the chip-firing reachability problem is in \coNP.
Also, we show a special case when the chip-firing reachability problem is polynomial time solvable even on non-Eulerian digraphs. 
If $G$ is a strongly connected digraph and the chip-distribution $y$ is recurrent, i.e., it is reachable from itself by a non-empty legal game, then we characterize the set of chip-distributions from which $y$ is reachable. 
This characterization enables one to decide in polynomial time whether a recurrent chip-distribution $y$ is reachable by a legal game from a given chip-distribution $x$.
For weakly connected directed graphs, we generalize the characterization theorem for chip-distributions $y$ that are recurrent restricted to each strongly connected component. This theorem also gives rise to a polynomial algorithm.

Finally, in Section \ref{s:open_prob}, we collect some open problems related to the reachability problem.
In this last section, we show that the chip-firing halting problem (which is a problem similar to the chip-firing reachability problem) is in $\NP\cap\coNP$ for Eulerian digraphs, which makes it a good candidate for the search of a polynomial algorithm.

\section{Preliminaries}
\label{s:prelim}

\subsection{Digraphs}

Throughout this paper, \emph{digraph} means a (weakly) connected directed graph that can have multiple edges but no loops.  
A digraph is usually denoted by $G$. The vertex set and edge set of a digraph $G$ are denoted by $V(G)$ and $E(G)$ (or simply $V$ and $E$), respectively.
For a vertex $v$, the in-degree and the out-degree of $v$ are
denoted by $d^-(v)$ and $d^+(v)$, respectively.
We denote a directed edge leading from vertex $u$ to vertex $v$ by $\overrightarrow{uv}$.
The multiplicity of a directed edge $\overrightarrow{uv}$ is denoted by $\overrightarrow{d}(u, v)$.

A digraph is \emph{simple}, if $\overrightarrow{d}(u, v)\leq 1$ and $\overrightarrow{d}(v, u)\leq 1$ for each pair of vertices $u,v\in V$. 
A digraph is \emph{Eulerian}, if $d^+(v)=d^-(v)$ for each $v\in V$.
A digraph is \emph{strongly connected}, if for each pair of vertices $u,v$, there is a directed path from $u$ to $v$, and also from $v$ to $u$. 
A connected Eulerian digraph is always strongly connected.
Each digraph has a unique decomposition into strongly connected components. 
A component is called a \emph{sink component}, if there is no edge leaving the component. Note that a digraph always has at least one sink-component.

Throughout this paper, we identify undirected graphs with the digraph obtained by replacing each edge with a pair of oppositely directed edges. This way, undirected graphs become special Eulerian digraphs.

If we give a digraph as an input to an algorithm, we always encode it by its adjacency matrix. Hence the size of the input is not increased by the values of the edge multiplicities, just the logarithms of them. An algorithm runs in \emph{polynomial time} if the number of basic steps it makes is bounded by a polynomial in the size of the input. If the input of an algorithm consists of integer numbers, we can talk about strongly polynomial running time. An algorithm runs in \emph{strongly polynomial time} if the following two conditions are satisfied: 
\begin{itemize}
  \item[1.] 
  in the model, where basic arithmetic operations (addition, subtraction, multiplication, division, and comparison) take a unit time step to perform,
its running time is bounded by a polynomial in the number of integers contained in the input;
  \item[2.] the space used by the algorithm is bounded by a polynomial in the size of the input.
\end{itemize}
For a more detailed explanation, see \cite[Chapter 1.3]{groetschel-lovasz-shrijver}.

We denote by $\mathbb{Z}^{V}$ the set of integer vectors indexed by the vertices of a digraph $G$. 
$\mathbb{Z}_+^V$ denotes the set of vectors with nonnegative integer coordinates. 
For $S\subseteq V$, we denote the characteristic vector of $S$ by $\mathbf{1}_S$.
If $S=\{v\}$, we use the notation $\mathbf{1}_v$. 
We denote the vector with each coordinate equal to zero by $\mathbf{0}$.

The \emph{Laplacian} of a digraph $G$ is the
following matrix $L \in \mathbb{Z}^{V \times V}$:
\[
L(u,v) = \left\{\begin{array}{cl} -d^+(v) & \text{if } u=v, \\
\overrightarrow{d}(v, u) & \text{if } u\neq v.      
\end{array} \right.
\]

\subsection{Chip-firing}

In a chip-firing game we consider a digraph $G$ with a pile of chips on each of its nodes. A position of the game, called a \emph{chip-distribution} (or just distribution) is described by a vector $x \in \mathbb{Z}_+^V$, where $x(v)$ is interpreted as the number of chips on vertex $v \in V$. We denote the set of all chip-distributions on $G$ by $\Chip(G)$.

The basic move of the game is \emph{firing} a vertex. It means that this vertex passes a chip to its neighbors along each outgoing edge, and so its number of chips decreases by its out-degree. In other words, firing a vertex $v$ means taking the new chip-distribution $x + L\mathbf{1}_v$ instead of $x$.

The firing of a vertex $v \in V$ is \emph{legal} with respect to a distribution $x$, if $v$ has a nonnegative amount of chips after the firing (i.e.\ $x(v) \geq d^+(v)$). A \emph{legal game} is a sequence of distributions in which every distribution is obtained from the previous one by a legal firing. For a legal game, let us call the vector $f \in \mathbb{Z}_+^V$, where $f(v)$ equals the number of times $v$ has been fired, the \emph{firing vector} of the game.
A game terminates if no firing is legal with respect to the last distribution.
The following theorem of Bj\"orner, Lov\'asz and Shor describes a very important ``Abelian'' property of the chip-firing game.

\begin{thm}\cite[Remark 2.4]{BLS91} \label{t:chip-firing_commutative}
From a given initial chip-distribution, either every legal game can be continued indefinitely, or every legal game terminates after finitely many steps. The firing vector of every maximal legal game is the same.
\end{thm}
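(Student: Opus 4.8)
The plan is to reduce everything to a single structural lemma about \emph{superposing} legal games, and the whole argument will ride on one elementary monotonicity observation: since $G$ has no loops, firing a vertex $u$ changes the pile at any vertex $v\neq u$ by $\overrightarrow{d}(u,v)\geq 0$; more generally, if $g\in\mathbb{Z}_+^V$ and $g(v)=0$, then $(Lg)(v)=\sum_{u\neq v}\overrightarrow{d}(u,v)\,g(u)\geq 0$. In words, executing a collection of firings none of which is the vertex $v$ can only increase the number of chips on $v$.

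\smallskip
\emph{Superposition Lemma.} If $\sigma$ and $\tau$ are finite legal games starting from $x$, with firing vectors $f_\sigma$ and $f_\tau$, then there is a legal game from $x$ having $\sigma$ as a prefix whose firing vector equals the coordinatewise maximum $\max(f_\sigma,f_\tau)$.

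\smallskip
I would prove this by induction on the length of $\tau$, ``replaying $\tau$ on top of $\sigma$''. Writing $\tau=(t_1,\dots,t_m)$ and $\tau_j=(t_1,\dots,t_j)$, suppose we have built a legal game $\rho_j$ extending $\sigma$ with firing vector $f_j=\max(f_\sigma,f_{\tau_j})$, and consider $t_{j+1}$. If $t_{j+1}$ has been fired at least as often in $\tau_j$ as in $\sigma$, then $f_j$ agrees with $f_{\tau_j}$ in the coordinate $t_{j+1}$ while $f_j\geq f_{\tau_j}$ elsewhere, so by the monotonicity observation the pile on $t_{j+1}$ after $\rho_j$ is at least what it is after $\tau_j$, which is $\geq d^+(t_{j+1})$ because $\tau$ is legal; hence $t_{j+1}$ may be fired legally after $\rho_j$, and the new firing vector is $f_j+\mathbf{1}_{t_{j+1}}=\max(f_\sigma,f_{\tau_{j+1}})$. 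Otherwise $t_{j+1}$ has already been fired strictly more often in $\sigma$, so firing it once more in $\tau$ does not change the maximum, and we keep $\rho_{j+1}=\rho_j$. After processing all of $\tau$ we obtain the desired game.

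Granting the lemma, the theorem falls out. Assume first that some maximal legal game $\sigma$ is finite, and set $N=\sum_{v}f_\sigma(v)$. For an arbitrary finite legal game $\tau$, the lemma yields a legal game extending $\sigma$ with firing vector $\max(f_\sigma,f_\tau)\geq f_\sigma$; as $\sigma$ is not extendable, that game must be $\sigma$ itself, forcing $f_\tau\leq f_\sigma$. Consequently every legal game has length at most $N$, so no infinite legal game exists and every legal game terminates; and choosing $\tau$ to be another maximal game gives $f_\tau\leq f_\sigma$, hence $f_\tau=f_\sigma$ by symmetry (and the final distributions then agree too, being $x+Lf_\sigma$ in both cases). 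Assume instead that no maximal legal game is finite. Then every finite legal game can be prolonged by one more legal firing, so by iterating we obtain arbitrarily long, hence infinite, legal extensions of it: every legal game can be continued indefinitely. The two cases are exhaustive and mutually exclusive, which is exactly the assertion of the theorem. The crux is pinning down the inductive statement of the Superposition Lemma — insisting on keeping $\sigma$ as a genuine prefix and layering $\tau$ on top, then tracking the coordinatewise maximum through the two cases according to whether $\tau$'s firing count for the next vertex has caught up with $\sigma$'s; once that bookkeeping is in place, the no-loops monotonicity makes legality come for free, and no estimate subtler than comparing two chip counts is ever needed.
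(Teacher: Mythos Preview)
Your argument is correct. Note, however, that the paper does not actually prove this theorem: it is quoted from \cite{BLS91} and stated here as background. So there is no ``paper's own proof'' to compare against.

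That said, your Superposition Lemma is precisely the standard exchange/confluence argument behind the abelian property, and your derivation of the dichotomy and the uniqueness of the maximal firing vector from it is the usual one. The inductive bookkeeping is clean: in Case~1 you correctly use that $f_j-f_{\tau_j}\ge\mathbf{0}$ with equality in the $t_{j+1}$-coordinate, so the monotonicity observation gives $(x+Lf_j)(t_{j+1})\ge (x+Lf_{\tau_j})(t_{j+1})\ge d^+(t_{j+1})$; in Case~2 the coordinatewise maximum does not change. The only remark I would add is that ``$\sigma$ cannot be extended'' literally means no single legal firing is available after $\sigma$, and your lemma produces an extension of $\sigma$ one firing at a time, so the conclusion $\max(f_\sigma,f_\tau)=f_\sigma$ is indeed forced at the very first appended step; you are implicitly using this, and it is fine. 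The passage from ``every finite legal game has length $\le N$'' to ``no infinite legal game exists'' via finite prefixes is also sound.
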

Based on this fact, we call a distribution $x$ \emph{terminating} if a legal game (hence, all legal games) started from $x$ terminates, and we call $x$ \emph{non-terminating} otherwise. 

For a given vector $b \in \mathbb{Z}_+^{V}$, let us call the following 
game \emph{chip-firing game with upper bound $b$}: We are only allowed to 
make legal firings, and each vertex $v$ can be fired at most $b(v)$ times 
during the whole game. Bj\"orner and Lov\'asz show the ``Abelian'' property for the bounded chip-firing game as well. 

\begin{lemma} \cite[Lemma 1.4]{BL92} \label{l:korlatos_jatek_moho}
For a given bound $b \in \mathbb{Z}_+^V$ and initial distribution $x$, each maximal 
bounded game with upper bound $b$ and initial distribution $x$ has the same firing vector.
\end{lemma}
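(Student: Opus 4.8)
The plan is to establish a one-sided monotonicity statement and then symmetrise it. As a preliminary observation, note that every legal bounded game with upper bound $b$ is finite, since the total number of firings cannot exceed $\sum_{v\in V}b(v)$; in particular every legal bounded game extends to a maximal one, and a maximal game consists of finitely many firings, so the lemma is genuinely a statement about firing vectors of finite sequences.

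The key claim I would prove is the following: \emph{if $(v_1,\dots,v_k)$ is any legal bounded game from $x$ with upper bound $b$ and firing vector $f$, and $(w_1,\dots,w_\ell)$ is a maximal legal bounded game from $x$ with upper bound $b$ and firing vector $h$, then $f\le h$ coordinatewise.} Granting this, one applies it twice to two given maximal games, each playing the role of the ``arbitrary'' game in turn, obtaining $f\le h$ and $h\le f$, hence $f=h$, which is exactly the assertion of the lemma.

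To prove the claim I would induct on the prefixes of the first game. For $0\le i\le k$ let $f^{(i)}$ denote the firing vector of $(v_1,\dots,v_i)$, so $f^{(0)}=\mathbf 0$ and $f^{(i)}=f^{(i-1)}+\mathbf 1_{v_i}$; I claim $f^{(i)}\le h$ for all $i$, by induction on $i$, the case $i=0$ being trivial. Assume $f^{(i-1)}\le h$, and suppose for contradiction that $f^{(i)}\not\le h$; since $f^{(i)}$ exceeds $f^{(i-1)}$ only in the $v_i$-coordinate, and only by $1$, this forces $f^{(i-1)}(v_i)=h(v_i)$. The $i$-th firing being legal and within the bound from the distribution $x+Lf^{(i-1)}$ gives $\bigl(x+Lf^{(i-1)}\bigr)(v_i)\ge d^+(v_i)$ and $f^{(i-1)}(v_i)<b(v_i)$. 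Now $h-f^{(i-1)}\ge\mathbf 0$ has $v_i$-coordinate $0$, so the $v_i$-coordinate of $L(h-f^{(i-1)})$ equals $\sum_{u\neq v_i}\overrightarrow{d}(u,v_i)\bigl(h(u)-f^{(i-1)}(u)\bigr)\ge 0$, whence $\bigl(x+Lh\bigr)(v_i)\ge\bigl(x+Lf^{(i-1)}\bigr)(v_i)\ge d^+(v_i)$; moreover $h(v_i)=f^{(i-1)}(v_i)<b(v_i)$. Hence $v_i$ can be legally fired within the bound from the terminal distribution $x+Lh$ of the maximal game, contradicting its maximality. Therefore $f^{(i-1)}(v_i)<h(v_i)$, so $f^{(i)}\le h$, and taking $i=k$ yields $f\le h$.

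The one point that needs care is the choice of the inductive quantity — one compares every prefix of a single game against a fixed maximal game, carrying both the chip-count constraint and the budget constraint along together — after which the decisive step is the one-line computation showing the $v_i$-coordinate cannot have decreased in passing from $x+Lf^{(i-1)}$ to $x+Lh$, which is immediate from the nonnegativity of the off-diagonal entries of $L$. Alternatively, one could run the argument through Newman's lemma: the rewriting system on attainable firing vectors, with a move $f\mapsto f+\mathbf 1_v$ whenever $v$ is legal at $x+Lf$ and $f(v)<b(v)$, is terminating and locally confluent because firing a vertex never destroys the fireability of another vertex, hence has a unique normal form; but the direct induction seems cleaner and avoids invoking external machinery.
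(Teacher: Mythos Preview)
The paper does not give its own proof of this lemma; it is quoted verbatim from \cite{BL92} and used as a black box. Your argument is correct and is essentially the standard exchange argument for strong convergence in chip-firing: you show that any legal bounded game is coordinatewise dominated by any maximal one, then symmetrise. The key step---that $(x+Lh)(v_i)\ge(x+Lf^{(i-1)})(v_i)$ when $h\ge f^{(i-1)}$ with equality in the $v_i$-coordinate---is exactly the observation that firing vertices other than $v_i$ can only increase the chip count at $v_i$, and you handle the budget constraint $h(v_i)<b(v_i)$ correctly alongside it. This is the same argument one finds in \cite{BL92} (and, in greater generality, in the abelian-network literature); your aside about Newman's lemma is also a legitimate alternative packaging of the same idea.
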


A nonnegative vector $p \in \mathbb{Z}_+^V$ is called a \emph{period vector} for $G$ if  $Lp=\mathbf{0}$. A non-zero period vector is called \emph{primitive} if its entries have no non-trivial common divisor. The following proposition follows from \cite[3.1 and 4.1]{BL92}.

\begin{prop} \label{prop::period}
For a strongly connected digraph $G$ there exists a unique primitive period vector $p_G$, moreover, it is strictly positive. If $G$ is connected Eulerian, then $p_G = \mathbf{1}_V$.
For a general digraph $G$, if $G_1, \dots, G_k$ are the sink components of $G$ and a vector $z \in \mathbb{Z}^V$ satisfies $Lz = \mathbf{0}$ then $z = \sum_{i=1}^k \lambda_i p_i$, where for $i \in \{1, \dots, k\}$, $\lambda_i\in \mathbb{Z}$ and $p_i$ is the primitive period vector of $G_i$ restricted to $V(G_i)$ and zero 
elsewhere.
\end{prop}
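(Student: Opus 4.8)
The plan is to handle the three assertions in turn: the strongly connected case carries the real content, the Eulerian case is a one-line check, and the general case reduces to the strongly connected one by sweeping over the strongly connected components in topological order. For strongly connected $G$ with $|V|\ge 2$ (the case $|V|=1$ being trivial), every vertex has $d^+(v)\ge 1$, so I can form the diagonal matrix $D$ with $D(v,v)=d^+(v)$ and the row-stochastic matrix $P$ with $P(v,u)=\overrightarrow d(v,u)/d^+(v)$; here $P$ is irreducible because its associated digraph is $G$, and a short computation gives $L=(P^{\top}-I)D$. Thus $Lz=\mathbf 0$ iff $Dz\in\ker(P^{\top}-I)$, and by the Perron--Frobenius theorem the eigenvalue $1$ of the irreducible nonnegative matrix $P$ is simple while $P^{\top}$ has a strictly positive $1$-eigenvector; hence $\ker(P^{\top}-I)$, and therefore $\ker L = D^{-1}\ker(P^{\top}-I)$, is one-dimensional and spanned by a strictly positive vector. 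Since $L$ is integral, this line also contains a nonzero integer vector; dividing it by the greatest common divisor of the absolute values of its entries and fixing the positive sign yields the unique primitive period vector $p_G$, which is strictly positive, and every period vector is a nonnegative integer multiple of $p_G$. When $G$ is connected Eulerian, $(L\mathbf 1_V)(u)=-d^+(u)+d^-(u)=0$, so $\mathbf 1_V$ is a period vector and, being primitive, equals $p_G$.

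For a general $G$ and a vector $z$ with $Lz=\mathbf 0$, fix a topological order $C_1,\dots,C_m$ of the strongly connected components in which every edge of the condensation goes from a smaller to a larger index; since sink components have no outgoing edge, the order can be chosen so that $C_{m-k+1},\dots,C_m$ are exactly the sink components $G_1,\dots,G_k$. I would then prove by induction along this order that $z$ vanishes on every non-sink $C_i$. For $u\in C_i$, each in-edge of $u$ originates in $C_i$ or in some $C_j$ with $j<i$; on the latter $z$ already vanishes by the induction hypothesis, so $(Lz)(u)=0$ becomes $-d^+(u)z(u)+\sum_{v\in C_i}\overrightarrow d(v,u)z(v)=0$. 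Hence $M_i\,(z|_{C_i})=\mathbf 0$, where $M_i\in\mathbb Z^{C_i\times C_i}$ has $M_i(u,u)=-d^+(u)$ (the out-degree in the whole of $G$) and $M_i(u,v)=\overrightarrow d(v,u)$ for $u\ne v$. Now $M_i$ is irreducible because $C_i$ is strongly connected, it is column diagonally dominant because the off-diagonal absolute column sum in column $u$ equals the number of out-edges of $u$ lying inside $C_i$, which is at most $d^+(u)$, and this dominance is strict in the column of any vertex of $C_i$ that has an out-edge leaving $C_i$ --- and such a vertex exists exactly because $C_i$ is not a sink component. By the standard fact that an irreducible matrix which is weakly column-diagonally-dominant, with strict dominance in at least one column, is nonsingular, $M_i$ is invertible, whence $z|_{C_i}=\mathbf 0$.

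It remains to deal with the sink components. Fix a sink component $G_l$. Every in-edge of a vertex of $G_l$ comes either from a non-sink component (where $z=\mathbf 0$ by the previous step) or from inside $G_l$, since there is no edge between two distinct sink components; and $G_l$ has no outgoing edge, so for $u\in V(G_l)$ the quantity $d^+(u)$ equals the out-degree of $u$ within $G_l$. Consequently $(Lz)(u)=0$ for $u\in G_l$ reduces to $L_{G_l}(z|_{G_l})=\mathbf 0$, where $L_{G_l}$ is the Laplacian of the digraph $G_l$; so by the strongly connected case $z|_{G_l}=\lambda_l\,(p_l|_{V(G_l)})$ for some $\lambda_l$, which is an integer because $z$ is integral and $p_l$ is primitive. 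Since $z$ vanishes off $\bigcup_{l}V(G_l)$, this gives $z=\sum_{l=1}^k\lambda_l p_l$ as claimed.

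The one step that is not bookkeeping is the invertibility of $M_i$: that is where the orientation of $G$ (whether a component is a sink) genuinely matters, and it rests on the irreducible--diagonal--dominance criterion quoted above. The other ingredients --- Perron--Frobenius giving a one-dimensional positive kernel, the passage from a real kernel vector to an integer one, and the combinatorial bookkeeping that makes the induction valid (ordering the sink components last, and seeing that already-processed upstream vertices contribute nothing to the equations at hand) --- are routine once those two tools are granted.
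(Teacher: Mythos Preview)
Your argument is correct. The paper itself does not prove this proposition; it simply records it as a consequence of \cite[3.1 and 4.1]{BL92}. What you supply is therefore not an alternative to the paper's proof but a self-contained replacement for the citation: Perron--Frobenius on the row-stochastic matrix $P=D^{-1}A$ to get the one-dimensional positive kernel in the strongly connected case, and Taussky's irreducible weak-diagonal-dominance criterion to kill $z$ on every non-sink component before reducing the sink components to the strongly connected case. Both ingredients are standard and are applied correctly; in particular, your bookkeeping that places the sink components last in the topological order is exactly what makes the induction clean, since then every $C_j$ with $j<i$ is itself non-sink and the hypothesis ``$z|_{C_j}=\mathbf 0$'' is available when you need it.

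One small remark: you assert in passing that ``every period vector is a nonnegative integer multiple of $p_G$.'' This is true but uses primitivity (if $c\,p_G\in\mathbb Z^V$ with $\gcd$ of the entries of $p_G$ equal to $1$, then $c\in\mathbb Z$); it is worth one sentence, since the paper relies on exactly this fact later (e.g.\ in the proofs of Theorems~\ref{thm::recurrent_reachability} and~\ref{thm::elerhetoseg_elegs_felt_alt_graf}).
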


For a strongly connected digraph $G$, let us denote the sum of the coordinates of $p_G$ by $\per(G)$. For a general digraph $G$ let $\per(G)=\sum_{i=1}^\ell \per(G_i)$ where $G_1,\dots, G_\ell$ are the strongly connected components of $G$.  

\subsection{The reachability problem}

A basic question about the chip-firing game is the so-called reachability question: Given two 
chip-dis\-tri\-bu\-ti\-ons $x,y\in \Chip(G)$, is it possible to reach $y$ from 
$x$ by playing a legal game? Let us denote by $x \leadsto y$ if such a legal 
game exists.

Our main goal in this paper is to investigate the computational complexity of the reachability question.
Let us sum up the previous results about the problem. To do so, we state an important lemma of Bj\"orner and Lov\'asz.

 \begin{lemma} \cite[Lemma 4.3]{BL92} \label{l:per_vektor_kihagyhato}
   Let $p$ be a period vector of a digraph $G$, and suppose that $\alpha=(v_1, v_2, \dots, v_s)$ is a legal sequence of firings on $G$ from some initial distribution. Let $\alpha'$ be the sequence obtained from $\alpha$ by deleting the first $p(v)$ occurrences of each vertex $v$ (if $v$ occurs less than $p(v)$ times in $\alpha$, then we delete all of its occurrences). Then $\alpha'$ is also a legal sequence of firings from the same initial distribution.
 \end{lemma}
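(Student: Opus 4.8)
The plan is to run the truncated sequence $\alpha'$ alongside the original $\alpha$ and control the difference between the two reached distributions by a vector that is coordinatewise bounded by $p$. Write $x_0$ for the initial distribution, and for $0\le i\le s$ let $f_i\in\mathbb{Z}_+^V$ be the firing vector of the prefix $(v_1,\dots,v_i)$ of $\alpha$, so that $x_i:=x_0+Lf_i$ is the distribution reached after those $i$ legal firings. Set $h_i(v):=\min\{f_i(v),p(v)\}$ — this counts the occurrences of $v$ among the first $i$ letters of $\alpha$ that get deleted — and put $g_i:=f_i-h_i\ge\mathbf{0}$. Since $\alpha'$ keeps, among the first $i$ letters of $\alpha$, exactly those occurrences of each $v$ beyond the first $p(v)$ of them, and keeps them in their original order, the portion of $\alpha'$ coming from $(v_1,\dots,v_i)$ is a legal-looking sequence whose firing vector is precisely $g_i$. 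In particular $g_0=\cdots=g_{j-1}=\mathbf 0$ up to the first surviving letter, which matches the fact that $\alpha'$ has made no moves yet.

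Next I would prove legality of $\alpha'$ by induction along its steps. Let $v_i$ be a letter of $\alpha$ that survives in $\alpha'$; the surviving letters preceding it are exactly those occurring among $(v_1,\dots,v_{i-1})$, so, assuming inductively that all earlier moves of $\alpha'$ were legal, the distribution in $\alpha'$ just before $v_i$ is fired equals $x_0+Lg_{i-1}=x_{i-1}-Lh_{i-1}$. Hence it suffices to check $\big(x_{i-1}-Lh_{i-1}\big)(v_i)\ge d^+(v_i)$, and since legality of $\alpha$ already gives $x_{i-1}(v_i)\ge d^+(v_i)$, it is enough to show $(Lh_{i-1})(v_i)\le 0$. (Legality of every move of $\alpha'$ then automatically keeps all intermediate distributions in $\mathbb{Z}_+^V$, the fired vertex staying nonnegative and the others only gaining chips, so $\alpha'$ is a bona fide legal game from $x_0$; incidentally its final distribution is $x_s-Lh_s$.)

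For the remaining inequality, observe that because the occurrence of $v_i$ at position $i$ is \emph{not} among the first $p(v_i)$ occurrences of $v_i$ in $\alpha$, the prefix $(v_1,\dots,v_{i-1})$ already contains at least $p(v_i)$ of them, i.e. $f_{i-1}(v_i)\ge p(v_i)$ and therefore $h_{i-1}(v_i)=p(v_i)$. Using this, $0\le h_{i-1}(u)\le p(u)$ for all $u$, and the definition of $L$,
\begin{align*}
(Lh_{i-1})(v_i) &= -d^+(v_i)\,p(v_i) + \sum_{u\neq v_i}\overrightarrow{d}(u,v_i)\,h_{i-1}(u) \\
&\le -d^+(v_i)\,p(v_i) + \sum_{u\neq v_i}\overrightarrow{d}(u,v_i)\,p(u) = (Lp)(v_i) = 0,
\end{align*}
the last equality holding because $p$ is a period vector. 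This completes the induction and the proof. The only mildly delicate part is the bookkeeping in the first paragraph: matching prefixes of $\alpha$ with prefixes of $\alpha'$ and verifying that at the moment $v_i$ is fired in $\alpha'$ the counter $h_{i-1}(v_i)$ has already hit its cap $p(v_i)$ — once that is set up, the rest is the one-line estimate above driven by $Lp=\mathbf 0$.
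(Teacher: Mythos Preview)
Your argument is correct. The bookkeeping with $h_i(v)=\min\{f_i(v),p(v)\}$ and $g_i=f_i-h_i$ is exactly what is needed, and the key observation that a surviving occurrence of $v_i$ forces $h_{i-1}(v_i)=p(v_i)$, together with $0\le h_{i-1}\le p$ and $Lp=\mathbf{0}$, yields $(Lh_{i-1})(v_i)\le 0$ as you show. The induction along $\alpha'$ is set up properly: the surviving letters among $(v_1,\dots,v_{i-1})$ are precisely the prefix of $\alpha'$ preceding the surviving $v_i$, so the distribution just before that firing in $\alpha'$ is indeed $x_0+Lg_{i-1}=x_{i-1}-Lh_{i-1}$.

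Note, however, that the paper does not give its own proof of this lemma: it is quoted verbatim as \cite[Lemma~4.3]{BL92} and used as a black box. Your proof is essentially the standard argument one finds for this result (and presumably the one in \cite{BL92}): compare the original game with the truncated one via the ``capped'' deletion counter $h_i$, and use $Lp=\mathbf{0}$ to bound the discrepancy at the vertex being fired. So there is nothing to contrast here---you have supplied a clean version of the intended proof.
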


A vector $f\in \mathbb{Z}_+^V$ is called \emph{reduced} if $f \not \ge p$ for every non-zero period vector $p$. The following phenomenon is a direct consequence of the previous lemma:

\begin{lemma}\cite[Lemma 5.2]{BL92} \label{l:reach_with_reduced}
If $x\leadsto y$, then there exists a legal game transforming $x$ to $y$ with a reduced firing vector.
\end{lemma}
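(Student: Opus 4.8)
The plan is to argue by induction on the coordinate sum $\sum_{v\in V} f(v)$ of the firing vector, repeatedly stripping off period vectors using Lemma~\ref{l:per_vektor_kihagyhato} until the firing vector becomes reduced.

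First I would fix a legal game transforming $x$ into $y$; let $\alpha=(v_1,\dots,v_s)$ be the underlying legal sequence of firings and let $f\in\mathbb{Z}_+^V$ be its firing vector, so that $y=x+Lf$. If $f$ is already reduced we are done. Otherwise, by definition there is a non-zero period vector $p$ with $f\ge p$. Apply Lemma~\ref{l:per_vektor_kihagyhato} with this $p$: since $f(v)\ge p(v)$ for every $v$, deleting the first $p(v)$ occurrences of each vertex $v$ from $\alpha$ yields a legal sequence $\alpha'$ from the same initial distribution $x$, and the firing vector of $\alpha'$ is exactly $f-p\in\mathbb{Z}_+^V$.

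The key observation is that $\alpha'$ still ends at $y$: the final distribution of a legal game from $x$ with firing vector $g$ is $x+Lg$, so the game $\alpha'$ ends at $x+L(f-p)=x+Lf-Lp=y-\mathbf{0}=y$, using $Lp=\mathbf{0}$. Thus $\alpha'$ is a legal game witnessing $x\leadsto y$ whose firing vector $f-p$ has strictly smaller coordinate sum than $f$, because $p$ is a non-zero vector with nonnegative integer entries, hence $\sum_v p(v)\ge 1$. Since $\sum_v f(v)$ is a nonnegative integer that strictly decreases at each step, after finitely many repetitions of this reduction we arrive at a legal game from $x$ to $y$ whose firing vector is reduced.

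I do not expect a genuine obstacle here: once Lemma~\ref{l:per_vektor_kihagyhato} is available, the only things to check are that removing a period vector preserves the endpoint of the game (immediate from $Lp=\mathbf{0}$) and that the process terminates (immediate from the monovariant $\sum_v f(v)$). The one point requiring a little care is verifying that the hypothesis $f\ge p$ of Lemma~\ref{l:per_vektor_kihagyhato} is exactly what makes the deletion remove $p(v)$ occurrences of each $v$ and leave firing vector $f-p$ rather than something smaller.
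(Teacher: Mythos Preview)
Your argument is correct and is exactly the reasoning the paper has in mind: the paper does not spell out a proof but simply states that the lemma is ``a direct consequence'' of Lemma~\ref{l:per_vektor_kihagyhato}, which is precisely the iterated period-stripping you describe. Your care in checking that $f\ge p$ guarantees the new firing vector is $f-p$ (so the endpoint is preserved via $Lp=\mathbf{0}$) and that $\sum_v f(v)$ is a valid monovariant fills in all the details the paper leaves implicit.
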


Note that if $x \leadsto y$ then for the firing vector $f$ of a legal game transforming $x$ to $y$, $y = x + Lf$. Among the vectors $g \in \mathbb{Z}_+^V$ satisfying $y = x + Lg$, there is a unique one that is reduced.

The previous lemmas imply, that the reachability question can be decided ``greedily'': For given $x,y\in \Chip(G)$ one can decide if there exists a reduced vector $f$ with $y = x + Lf$. If no such vector exists then $x \not \leadsto y$. If such a vector $f$ exists, it can be computed. By Lemma \ref{l:reach_with_reduced}, $x\leadsto y$ if and only if there is a legal game from $x$ to $y$ with firing vector $f$. By Lemma \ref{l:korlatos_jatek_moho}, we can find greedily a maximal chip-firing game from $x$ with upper bound $f$. We have $x\leadsto y$ if and only if this maximal bounded chip-firing game has firing vector $f$.

This reasoning gives an algorithm for deciding the reachability problem, however, this algorithm is in general not polynomial, as the firing vector $f$ may have exponentially large elements.

Bj\"orner and Lov\'asz improve this naive algorithm, and obtain the following:

\begin{thm}[\cite{BL92}]
There is an algorithm that for given $x,y \in \Chip(G)$ on a digraph $G$ decides whether $x \leadsto y$ holds, and runs in
\[ O(|V|^2D^2\per(G)\log(|V| D N \per(G))) \] time, where $D=\max\{d^+(v): v\in V\}$ and $N$ is the number of chips in $x$.
\end{thm}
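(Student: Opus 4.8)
By the discussion preceding the theorem, the reachability question reduces as follows. One computes the unique reduced vector $f\in\mathbb{Z}_+^V$ with $y=x+Lf$; if no such vector exists, then $x\not\leadsto y$ by Lemma~\ref{l:reach_with_reduced}. Computing $f$ is routine linear algebra: the system $Lf=y-x$ decouples along the strongly connected component DAG of $G$, with the diagonal block of each sink component singular with kernel generated by the corresponding primitive period vector (Proposition~\ref{prop::period}) and every other diagonal block nonsingular, so the general integer solution is obtained by Gaussian elimination and its reduced representative by subtracting suitable period vectors; if that representative fails to be nonnegative, output ``no''; one checks that this computation fits within the stated time bound. By Lemma~\ref{l:korlatos_jatek_moho} it then remains to play a maximal bounded chip-firing game from $x$ with upper bound $f$ and test whether its firing vector equals $f$. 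This last step is the crux, since a move-by-move simulation takes $\|f\|_1$ steps and $\|f\|_\infty$ can be exponential in the input size (for instance of order $N$, or of order $\per(G)$).

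The plan is to simulate the greedy bounded game in \emph{bursts}, using Lemma~\ref{l:per_vektor_kihagyhato} as the engine. We run greedily while maintaining the current firing vector $g$ and distribution $z=x+Lg$, recording the distributions seen together with the firing vectors at which they were reached. As soon as the current $z$ coincides with a previously recorded distribution $z'$, reached with firing vector $g'\leq g$, we note that $p:=g-g'$ satisfies $Lp=\mathbf{0}$, hence is a period vector, and that the block of firings performed since the first occurrence of $z'$ is a legal firing sequence from $z'$ back to $z'$; repeating this block any number of times is still legal and leaves the distribution equal to $z'$. We therefore jump directly to the firing vector $g'+t\cdot p$, where $t$ is the largest integer with $g'+t\cdot p\leq f$ coordinatewise (this respects the bound $f$ throughout, since a prefix of the block fires each vertex at most $p(v)$ times). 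Computing $t$ and performing the jump costs $O(|V|)$ arithmetic operations on numbers of bit-size $O(\log(|V|DN\per(G)))$, instead of up to $t$ copies of the block; afterwards we resume the greedy game, which is legitimate by Theorem~\ref{t:chip-firing_commutative} and Lemma~\ref{l:korlatos_jatek_moho}.

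What remains is the running-time analysis: one must show that only polynomially many ordinary firings occur before a burst can be invoked, and that only polynomially many bursts occur, ``polynomially'' meaning in $|V|$, $D$ and $\per(G)$. The structural input is the reducedness of $f$. Processing the components in an elimination order with the sink components last, the value of $f$ on the ``binding'' vertex at each stage --- the vertex witnessing $f\not\geq p_i$ on a sink component $C_i$, respectively the forced small value on a source component --- is smaller than $\per(G)$, and once that vertex is exhausted it may be treated as a new sink, reducing the problem to a smaller digraph. Tracking how chips propagate along the component DAG and using Lemma~\ref{l:per_vektor_kihagyhato} to collapse the long stretches during which a single vertex fires repeatedly, one bounds the number of distinct regimes of the greedy game by roughly $|V|\per(G)$, each contributing $O(|V|D^{2})$ ordinary firings before it starts to repeat. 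Combined with the $O(\log(|V|DN\per(G)))$ cost of each fast-forward, this yields the claimed bound.

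The main obstacle is exactly this last analysis: showing that the greedy game settles into a periodic regime after a short transient and visits only polynomially many regimes. The delicate point is that a single vertex may legally fire $\Theta(N)$ or $\Theta(\per(G))$ times in a row, so one cannot afford even one ``super-round'' of explicit simulation per firing of a small-budget vertex --- the period-vector collapsing argument is essential --- and getting the interplay between the greedy firing order, the component structure, and Lemma~\ref{l:per_vektor_kihagyhato} exactly right is where the real work lies.
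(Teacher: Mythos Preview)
The paper does not prove this theorem; it is quoted as a result of Bj\"orner and Lov\'asz \cite{BL92} and stated without proof, serving only as background for the paper's own contributions. There is therefore nothing in the present paper to compare your sketch against.

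That said, a brief comment on your sketch. The high-level strategy---compute the reduced $f$, then simulate the bounded game greedily while detecting repeated configurations and fast-forwarding by period vectors---is indeed the idea behind the algorithm in \cite{BL92}. However, your running-time analysis is not really an analysis: phrases like ``one bounds the number of distinct regimes of the greedy game by roughly $|V|\per(G)$'' and ``each contributing $O(|V|D^{2})$ ordinary firings before it starts to repeat'' are assertions, not arguments, and you yourself flag the last paragraph as ``where the real work lies''. In particular, the actual argument in \cite{BL92} does not proceed by waiting for an exact repetition of the full distribution (which would give a pigeonhole bound exponential in $|V|$); rather, it bounds the number of firings between two consecutive firings of a fixed ``slow'' vertex and exploits the structure of reduced vectors more directly. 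If you want to claim the stated bound, you need to supply that combinatorial argument rather than gesture at it.
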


This algorithm is not polynomial in general, as $\per(G)$ and $D$ may be exponentially large. However, as for simple Eulerian digraphs, $\per(G)=|V|$ and $D \le |V|$, the algorithm is weakly polynomial for simple Eulerian digraphs.

In this paper, we show that the reachability problem can be decided in polynomial time also for general Eulerian digraphs (i.e.\ also for Eulerian digraphs with multiple edges). In addition, our algorithm is strongly polynomial. 
For general digraphs, we show that the reachability problem is in \coNP.
We also show that in the special case that $y$ is recurrent restricted to each strongly connected component, whether $x\leadsto y$ holds can be decided in polynomial time for general digraphs.

\subsection{Two necessary conditions}

Before presenting our results, let us describe two simple, polynomial-time computable necessary conditions for the reachability problem:

The first necessary condition is the linear equivalence of $x$ and $y$:
\begin{defn}[Linear equivalence \cite{BN-Riem-Roch}]
For $x, y \in \mathbb{Z}^V$, let $x\sim y$ if there exists $z\in \mathbb{Z}^V$ such that $x = y + Lz$.  In this case we say that $x$ and $y$ are linearly equivalent.
\end{defn}
One can easily check that $\sim$ defines an equivalence relation on $\mathbb{Z}^V$. This equivalence relation has been defined by Baker and Norine \cite{BN-Riem-Roch}, and it plays an important role in the theory of graph divisors.
It is easy to see, that $x\sim y$ is a necessary condition for $x\leadsto y$. Indeed, if there is a legal game leading from $x$ to $y$, let its firing vector be $f$. Then $y=x+Lf$.

A slightly stronger necessary condition for $x\leadsto y$ is the existence of $f\in \mathbb{Z}_+^V$ such that $y=x+Lf$. (This condition is stronger than the linear equivalence only in that we require $f$ to be nonnegative.) 
As the firing vector of a legal game from $x$ to $y$ gives a nonnegative $f$ such that $y=x+Lf$, this condition is also necessary. 
For strongly connected digraphs, the two conditions are equivalent, since a strong digraph has a period vector that is positive on every coordinate. 

By \cite[Theorem 1.4.21]{groetschel-lovasz-shrijver}, whether $x\sim y$ can be decided in polynomial time, and if the answer is yes, a vector $f\in  \mathbb{Z}^V$ such that $y=x+Lf$ can be computed. We show that the second necessary condition can also be decided in polynomial time, and in the case of Eulerian digraphs, even in strongly polynomial time.
\begin{prop}\label{prop::linequi.strongly.poly}
There is a polynomial algorithm that for a given digraph $G$ and $x,y\in Chip(G)$ decides whether there exists a nonnegative integer vector $f$ such that $y = x + Lf$, and if such a vector exists, it computes one. \\
In the case of Eulerian digraphs, this can be done in strongly polynomial time.
\end{prop}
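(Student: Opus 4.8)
The plan is to reduce both claims to producing a single integer solution $f_0$ of $Lf = y - x$ and then correcting it coordinate-wise, using the description of the integer kernel of $L$ provided by Proposition~\ref{prop::period}.

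\emph{General digraphs, polynomial time.} By \cite[Theorem~1.4.21]{groetschel-lovasz-shrijver} one can decide in polynomial time whether $x\sim y$, and if so compute an integer vector $f_0$ with $y = x + Lf_0$; if $x\not\sim y$, then no nonnegative such $f$ exists and we return ``no''. Otherwise we compute the strongly connected components of $G$, take the sink components $G_1,\dots,G_k$, and for each $i$ compute the primitive period vector $p_i$ of $G_i$, extended by zero to $V$, which is strictly positive on $V(G_i)$ by Proposition~\ref{prop::period}; this is a polynomial-time integer linear algebra computation, and the entries of $p_i$ have polynomially many bits (being bounded by $\prod_{v\in V(G_i)} d^+(v)$). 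By Proposition~\ref{prop::period}, the integer solutions of $Lf = y - x$ are exactly the vectors $f_0 + \sum_{i=1}^k \lambda_i p_i$ with $\lambda_i\in\mathbb{Z}$, and since the vertex sets $V(G_i)$ are pairwise disjoint, the constraint $f\ge\mathbf{0}$ decouples over them. For a vertex $v$ lying in no sink component, every integer solution has $v$-coordinate $f_0(v)$, so if $f_0(v)<0$ for some such $v$ we return ``no''. Otherwise we set $\lambda_i := \max_{v\in V(G_i)} \lceil -f_0(v)/p_i(v)\rceil$ and output $f := f_0 + \sum_{i=1}^k \lambda_i p_i$; then $Lf = Lf_0 = y - x$, the vector $f$ is integral, and $f\ge\mathbf{0}$, since on $V(G_i)$ we have $f(v) = f_0(v) + \lambda_i p_i(v)\ge 0$ by the choice of $\lambda_i$ and the positivity of $p_i$, while elsewhere $f(v) = f_0(v)\ge 0$.

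\emph{Eulerian digraphs, strongly polynomial time.} Now $G$ is strongly connected, $p_G = \mathbf{1}_V$, and $G$ is its own unique sink component, so by the previous paragraph it suffices to produce one integer solution $f_0$ of $Lf = y - x$ in strongly polynomial time; $f_0 - (\min_v f_0(v))\mathbf{1}_V$ is then a nonnegative one, using $L\mathbf{1}_V = \mathbf{0}$. Fix a vertex $q$ and let $\tilde L$ be the reduced Laplacian obtained by deleting the row and column of $q$; since $G$ is strongly connected, $\tilde L$ is nonsingular. Put $b := y - x$. First check whether $\sum_v b(v) = 0$ (equivalently, whether $x$ and $y$ carry the same number of chips), which is necessary since firing a vertex preserves the total number of chips; if it fails, return ``no''. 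Otherwise solve the nonsingular rational system $\tilde L\tilde f = b|_{V\setminus\{q\}}$ by Gaussian elimination, which can be carried out in strongly polynomial time (see \cite[Chapter~1.4]{groetschel-lovasz-shrijver}; all intermediate numbers are ratios of subdeterminants of $\tilde L$ and hence of polynomial bit-size). If the unique solution $\tilde f$ is not integral, then $Lf = b$ has no integer solution and we return ``no''. If $\tilde f$ is integral, extend it by giving the $q$-coordinate the value $0$, obtaining $f_0\in\mathbb{Z}^V$ with $(Lf_0)(v) = b(v)$ for all $v\ne q$; this also holds at $v = q$ because $Lf_0$ and $b$ have the same (zero) coordinate-sum. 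Output $f := f_0 - (\min_v f_0(v))\mathbf{1}_V$.

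The steps requiring care are the justifications of the negative answers: in the general case, that every integer solution of $Lf = y - x$ agrees with $f_0$ at vertices outside the sink components (immediate from Proposition~\ref{prop::period}, as the $p_i$ vanish there); and in the Eulerian case, that non-integrality of $\tilde f$ rules out every integer solution (an integer solution, shifted to have $q$-coordinate $0$, would restrict to an integer solution of $\tilde L\tilde f = b|_{V\setminus\{q\}}$, contradicting uniqueness). For the strongly polynomial part, the main point to watch is the classical but nontrivial fact that Gaussian elimination on the nonsingular integer matrix $\tilde L$ can be implemented so that all intermediate quantities stay of polynomial bit-size, which I expect to be the most technical ingredient.
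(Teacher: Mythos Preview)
Your proof is correct and follows essentially the same route as the paper: for general digraphs, obtain one integer solution via \cite[Theorem~1.4.21]{groetschel-lovasz-shrijver}, observe (using Proposition~\ref{prop::period}) that all other integer solutions differ only on sink components by multiples of their primitive period vectors, and shift accordingly; for Eulerian digraphs, solve the nonsingular reduced Laplacian system in strongly polynomial time, check integrality, and translate by a multiple of $\mathbf{1}_V$. Your write-up is in fact a bit more explicit than the paper's (you spell out the $\lambda_i$ formula and the argument for why non-integrality of $\tilde f$ is fatal), but the underlying ideas and the cited tools coincide.
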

\begin{proof}
First, let $G$ be Eulerian. If the number of chips in $x$ and $y$ are different, then there is no such $f$. Now suppose that $\sum_{v\in V}x(v)=\sum_{v\in V}y(v)$.
As a connected Eulerian digraph is strongly connected, the Laplacian matrix $L$ of $G$ has a one-dimensional kernel, and for an arbitrary vertex $v\in V(G)$, the matrix $L_v$ obtained from $L$ by deleting the row and column corresponding to $v$ is nonsingular. We can compute $L_v^{-1}$ in strongly polynomial time \cite[Corollary 1.4.9]{groetschel-lovasz-shrijver}. Let $x_v$ and $y_v$ be the vectors we get from $x$ and $y$ by deleting the coordinate corresponding to $v$, respectively.
Let $g\in\mathbb{R}^V$ be the vector with
$$g(u)=
\left\{\begin{array}{cl} 
(L_v^{-1}(y_v-x_v))(u) & \text{if } u\neq v \\
        0 & \text{if } u=v.
\end{array} \right.$$

It is easy to see that $y(u) = (x + Lg)(u)$ for each $u \neq v$, and since $\sum_{u\in V}x(u)=\sum_{u\in V}y(u)$, we have $y=x+Lg$. 
The coordinates of $g$ are not necessarily integer. All the vectors $f$ such that $y=x+Lf$ are of the form $g+c\cdot \per_G$, therefore we need to decide if there is a nonnegative integer vector of this form.
As now $G$ is Eulerian, $\per_G=\mathbf{1}_G$. Since $g(v)=0$, $c$ needs to be an integer, hence $g$ also needs to be an integer vector. If $g$ is an integer vector, $c=\min\{-g(u): u\in V\}$ is an appropriate choice.

If $G$ is not Eulerian, we proceed with the following polynomial, although not strongly polynomial algorithm. By \cite[Theorem 1.4.21]{groetschel-lovasz-shrijver}, we can decide in polynomial time if the equation $Lg=y-x$ has an integer solution, and if it does, compute one. By Proposition \ref{prop::period}, a nonnegative solution exists if and only if the $g$ we got from solving $Lg=y-x$ has nonnegative coordinates on the non-sink components. If $g$ is nonnegative on the non-sink components, we can make it nonnegative by adding an appropriate period vector to it.
\end{proof}

\section{A polynomial algorithm for Eulerian digraphs}
\label{s:Euler_multi}

In this section, we describe our algorithm deciding the reachability problem on Eulerian digraphs, which is polynomial even for graphs with multiple edges. The following lemma is the heart of our algorithm. Informally, it says, that if one chip-distribution is reachable from another, then it can be reached so that we fire an ascending chain of subsets of vertices.

\begin{lemma} \label{l:growing_sets}
Let $G$ be an Eulerian digraph. Suppose that we have two chip-distributions $x$ and $y$ such that $x\leadsto y$. Then    
there exists a sequence of legal firings 
$(v_1, v_2, \dots ,v_s)$ 
that transforms $x$ to $y$, and there exist indices 
$i_0=0,i_1,i_2, \dots i_t=s$ 
such that for each $j = 1, \dots, t$, no vertex appears twice in the sequence $v_{i_{j-1}+1}, \dots, v_{i_j}$, and by setting 
$S_j = \{v_{i_{j-1}+1}, \dots, v_{i_j}\}$, 
we have 
$S_1 \subseteq S_2 \subseteq \dots \subseteq S_t \subsetneq V$.
\end{lemma}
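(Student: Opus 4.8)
The plan is to prove the statement by induction on $\|f\|_1 = \sum_{v\in V}f(v)$, where $f$ is the reduced firing vector of a legal game from $x$ to $y$; such a game, and such an $f$, exist by Lemma~\ref{l:reach_with_reduced}, and $f$ is the unique vector in $\mathbb{Z}_+^V$ with $y = x + Lf$ that is reduced. Two easy observations set the stage. Since $G$ is connected Eulerian, $\mathbf 1_V$ is a period vector (Proposition~\ref{prop::period}), so $f$ reduced means $f\not\ge\mathbf 1_V$, i.e.\ $T := \{v\in V : f(v) > 0\}$ is a \emph{proper} subset of $V$; this is precisely why all the sets we produce will lie properly inside $V$. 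Also, if $f = \mathbf 0$ then $x = y$ and the empty game (taking $t = 0$) works, so we may assume $f\neq\mathbf 0$ and set $M := \max_v f(v)\ge 1$. The decomposition we aim for uses the rounds $S_j := \{v : f(v)\ge M - j + 1\}$ for $j = 1,\dots,M$: since $M - j + 1$ decreases in $j$ we have $S_1\subseteq\cdots\subseteq S_M = T\subsetneq V$, and each $v$ lies in $S_j$ for exactly the $f(v)$ indices $j\in\{M - f(v)+1,\dots,M\}$. Hence a legal game consisting of rounds $S_1,\dots,S_M$, where the $j$-th round fires every vertex of $S_j$ exactly once in some legal order, has firing vector $f$ and exactly the claimed structure. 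So the whole problem is to produce such a game.

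The key idea is to peel off the \emph{last} round, which will be $T$ itself. (Peeling off the first round does not work directly: the set of firing vectors achievable from $x$ need not be closed downwards --- on a directed cycle a vector $\mathbf 1_v$ is typically not achievable even though some multiple of the period vector, which dominates it, is --- so one cannot split $\mathbf 1_{S_1}$ off the front.) For the last round I would combine two ingredients. First, Lemma~\ref{l:per_vektor_kihagyhato} applied with the period vector $p = \mathbf 1_V$: deleting the first occurrence of each vertex from a legal game with firing vector $f$ yields a legal game from $x$ with firing vector $f - \mathbf 1_T$, which is again reduced and has strictly smaller $\ell_1$-norm; by the inductive hypothesis this game may be taken in layered form, its rounds being $S_1\subseteq\cdots\subseteq S_{M-1}$ (a direct check shows these are the same sets, now read off from $f - \mathbf 1_T$; this chain is empty if $M=1$), and it ends at $x' := x + L(f - \mathbf 1_T)$. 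Second, I would prove a cancellation lemma: \emph{if $w + Lh_1$ and $w + Lh_2$ are both reachable from a distribution $w$ by legal games with firing vectors $h_1\le h_2$ respectively, then $w + Lh_1\leadsto w + Lh_2$ by a legal game with firing vector $h_2 - h_1$.} Applied with $w = x$, $h_1 = f - \mathbf 1_T$, $h_2 = f$, this yields a legal game from $x'$ with firing vector $\mathbf 1_T$ --- that is, a sequence firing each vertex of $T$ exactly once --- which is exactly the missing last round $S_M = T$. Concatenating the layered game for $f - \mathbf 1_T$ with this round finishes the induction, because $S_{M-1} = \{v : f(v)\ge 2\}\subseteq T$, so the chain of sets remains ascending and all sets stay inside $V$ properly.

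The genuine content is the cancellation lemma, which I would prove by a ``credit replay''. Let $\gamma = (w_1,\dots,w_m)$ be the legal game from $w$ with firing vector $h_2$. Give each vertex $v$ an initial credit of $h_1(v)$ and replay $\gamma$ starting from the distribution $w + Lh_1$, using the rule: when $\gamma$ fires a vertex $v$ that still has credit, skip the firing and spend one unit of $v$'s credit; otherwise actually fire $v$. The resulting game fires each $v$ exactly $h_2(v) - h_1(v)$ times and ends at $w + Lh_2$, so only legality must be checked. Compare the replay with $\gamma$ at the moment $\gamma$ fires some $v$ and this firing is actually performed; suppose $\gamma$ has so far fired each vertex $u$ exactly $c(u)$ times. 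Then the replay has so far fired $u$ exactly $\max(c(u) - h_1(u),0)$ times, so its current distribution is $w + L\mu$ with $\mu(u) = \max(h_1(u), c(u))$, and since $v$'s credit is exhausted we have $c(v)\ge h_1(v)$, hence $\mu(v) = c(v)$. Using the definition of $L$, the number of chips at $v$ in the replay minus the number at $v$ in $\gamma$'s current distribution $w + Lc$ equals $\sum_{u\neq v}\overrightarrow{d}(u,v)\bigl(\max(h_1(u),c(u)) - c(u)\bigr)\ge 0$; intuitively the only difference between the two games is that some in-neighbours of $v$ were ``pre-fired'' via $h_1$, which can only add chips at $v$. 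So $v$ has at least as many chips in the replay as in $\gamma$, where firing it was legal; therefore firing it is legal in the replay as well. I expect this comparison to be the one real obstacle; given it, the induction and the bookkeeping around the $S_j$ are routine.
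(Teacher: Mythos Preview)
Your proof is correct and follows the same overall strategy as the paper's: induct on the size of the reduced firing vector, use Lemma~\ref{l:per_vektor_kihagyhato} to strip one occurrence of each vertex and obtain a legal game with firing vector $f-\mathbf 1_T$, apply the inductive hypothesis to that shorter game, and then append a final round firing the whole support $T$. The one genuine difference is how the final round is justified. The paper invokes Lemma~\ref{l:korlatos_jatek_moho} (the Abelian property for bounded games): since the original game $\alpha$ is a maximal bounded game with bound $f$, and the shortened game $\alpha'$ is a valid start of the same bounded game, $\alpha'$ must extend legally to firing vector $f$, and the tail of that extension is the needed round. You replace this citation by a self-contained ``cancellation lemma'' with a credit-replay proof. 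Your argument is correct---the key comparison $\mu(v)=c(v)$ together with $\mu\ge c$ gives exactly the inequality $(L(\mu-c))(v)\ge 0$ you need---and it has the virtue of being elementary and not relying on \cite{BL92}. Note, however, that your cancellation lemma is itself an immediate corollary of Lemma~\ref{l:korlatos_jatek_moho}: play the bounded game with upper bound $h_2$, begin with a legal game realizing $h_1\le h_2$, and extend maximally; the tail is then a legal game from $w+Lh_1$ with firing vector $h_2-h_1$. So the two proofs differ only in whether this step is quoted from \cite{BL92} or reproved from scratch.
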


 \begin{proof}
   Lemma \ref{l:per_vektor_kihagyhato} plays a key role in this proof. Note that for Eulerian digraphs, Lemma \ref{l:per_vektor_kihagyhato} says, that if for a legal sequence of firings, we leave out the first occurrence of each vertex that occurs in the sequence, we still get a legal game.
   
   Since $x\leadsto y$, there exists a sequence of firings $\alpha=(w_1,\dots w_s)$ that is legal, and transforms $x$ to $y$. Let $f_{\alpha}\in\mathbb{Z}_+^V$ be the firing vector of $\alpha$, and $S_{\alpha}$ be the set of vertices that occur at least once in $\alpha$.
	Notice that we can suppose that $S_{\alpha}\neq V$. Indeed, if $S_{\alpha}=V$ that means that each vertex occurs in $\alpha$. Then by Lemma \ref{l:per_vektor_kihagyhato}, leaving out the first occurrence of each vertex, we still get a legal game $\alpha'$ with firing vector $f_{\alpha}-\mathbf{1}_V$. As $L f_{\alpha}= L(f_{\alpha}-\mathbf{1}_V)$, this game still transforms $x$ to $y$. We can continue this until there is a vertex that does not occur in our legal sequence of firings, hence we can suppose $S_{\alpha}\neq V$.

	We use induction for the largest coordinate of $f_{\alpha}$. If the largest coordinate of $f_{\alpha}$ is one, then we have $t=1$ and $S_1=S_\alpha \neq V$, and we are ready.   
   
	Suppose that the largest coordinate of $f_{\alpha}$ is larger than one. 
We prove that from initial distribution $x$, there exists a legal sequence of firings $\alpha'$ with firing vector $f_{\alpha'}=f_{\alpha}-\mathbf{1}_{S_\alpha}$ that can be extended legally by a sequence $\beta$ of firings with firing vector $\mathbf{1}_{S_\alpha}$. Indeed, 
by Lemma \ref{l:per_vektor_kihagyhato}, the sequence of firings $\alpha'$ that we get from $\alpha$ by deleting the first occurrence of each vertex is still legal. The firing vector of this sequence is $f_{\alpha}-\mathbf{1}_{S_{\alpha}}$. Play the bounded chip-firing game with upper bound $f_{\alpha}$ from initial distribution $x$. Then $\alpha'$ is a valid beginning. As $\alpha$ is a legal chip-firing game with upper bound $f_{\alpha}$, and its firing vector is $f_{\alpha}$, by Lemma \ref{l:korlatos_jatek_moho}, each maximal bounded chip-firing game with upper bound $f_{\alpha}$ has firing vector $f_{\alpha}$. Hence $\alpha'$ can be extended to a legal game with firing vector $f_{\alpha}$. 
Let us call the sequence of the last $|S_{\alpha}|$ firings $\beta$.
   
   The largest coordinate of $f_{\alpha'}$ is strictly smaller than the largest 
   coordinate of $f_{\alpha}$, hence by the induction hypothesis, there is a 
   legal sequence $\gamma=(v_1,\dots, v_{s'})$ of firings with firing vector 
   $f_{\alpha'}$ such that there exist indices $i_0=0,i_1,i_2, \dots i_t=s'$ 
   such that for each $j=1,\dots, t$, no vertex appears twice in the 
   sequence 
   $v_{i_{j-1}+1}, \dots v_{i_j}$, and after setting $S_j = \{v_{i_{j-1}+1}, 
   \dots v_{i_j}\}$, we have $S_1 \subseteq S_2 \subseteq \dots \subseteq S_t$.
As $\alpha'$ can be legally extended by $\beta$,
$\gamma$ can also be legally extended by $\beta$, since the chip-distribution after a sequence of firings only depends on the firing vector, which is the same for $\gamma$ and for $\alpha'$. Now let $i_{t+1}=s$, thus $S_{t+1}=S_{\alpha}$.
As $S_t\subseteq S_{\alpha'}\subseteq S_{\alpha}\neq V$, we proved the statement for $\alpha$.
\end{proof}

\begin{remark}
'Ascending chains' also play a role in the related field of graph divisor theory, see for example \cite[Lemma 1.3.]{Dion_treewidth} or the notion of 'level sets' in \cite{vDdB.bsc.thesis}. There is also an analogous lemma for the model of cluster firing, see Section 4 in \cite{Holroyd08}.
\end{remark}

\begin{thm}
There is a strongly polynomial algorithm that decides whether $x \leadsto y$ for two chip-distributions $x$ and $y$ on an Eulerian digraph $G$. 
\end{thm}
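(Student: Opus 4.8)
The plan is to combine the necessary condition of Proposition~\ref{prop::linequi.strongly.poly} with Lemma~\ref{l:growing_sets} to reduce the reachability question to a single subroutine: given a proper vertex set $T\subsetneq V$, a distribution $z$ and an integer $m\ge 1$, decide whether some legal game fires only vertices of $T$ and fires each of them exactly $m$ times. Since $G$ is connected Eulerian (hence strongly connected), Proposition~\ref{prop::period} gives that its period vectors are exactly the $c\,\mathbf{1}_V$ with $c\in\mathbb{Z}_+$; I use this repeatedly. The step I expect to be the real obstacle is this subroutine: $m$ can be exponentially large in the input, so the naive ``play the $m$ rounds one by one'' implementation is too slow, and the subroutine must run in time polynomial in $\log m$.

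First I would run Proposition~\ref{prop::linequi.strongly.poly}: if there is no $f\ge\mathbf{0}$ with $y=x+Lf$, answer ``no''. Otherwise compute such an $f_0$ and set $f:=f_0-(\min_v f_0(v))\,\mathbf{1}_V$; then $\min_v f(v)=0$, so $f$ is reduced, and it is the unique reduced vector with $y=x+Lf$. If $f=\mathbf{0}$ then $y=x$ and the answer is ``yes''. Otherwise, by Lemma~\ref{l:reach_with_reduced}, $x\leadsto y$ holds iff there is a legal game from $x$ with firing vector $f$, and by Lemma~\ref{l:growing_sets} this happens iff there is a legal game firing an ascending chain $S_1\subseteq\dots\subseteq S_t\subsetneq V$; its firing vector $\sum_j\mathbf{1}_{S_j}$ has a zero coordinate (some vertex lies in no $S_j$), hence is reduced, hence equals $f$, forcing $t=\max_v f(v)$ and $S_j=\{v:f(v)\ge t-j+1\}$. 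Grouping equal consecutive sets, this chain prescribes firing the innermost level set, then the next one, and so on out to the support $U_r=\{v:f(v)\ge 1\}\subsetneq V$, each level set $U_i=\{v:f(v)\ge b_i\}$ being fired $b_i-b_{i+1}$ times in a row, where $b_1>\dots>b_r\ge 1$ are the distinct positive values of $f$ and $b_{r+1}:=0$. Thus $x\leadsto y$ iff, running the subroutine successively on $z_0=x$ and $z_i=z_{i-1}+(b_i-b_{i+1})L\mathbf{1}_{U_i}$ with $T=U_i$, $m=b_i-b_{i+1}$, every one of these $r\le|V|$ calls succeeds.

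For the subroutine, let $c_v$ (for $v\in T$) be the number of edges of $G$ entering $v$ from $V\setminus T$. Since $G$ is Eulerian, $(L\mathbf{1}_T)(v)=-c_v$ for $v\in T$, so firing all of $T$ once lowers the pile at each $v\in T$ by exactly $c_v$; in particular $z+mL\mathbf{1}_T$ is nonnegative only if $z(v)\ge m c_v$ on $T$, which I check first. For legality, contract $V\setminus T$ to one sink vertex $q$ (deleting the loops that arise): the resulting digraph $H$ is again connected Eulerian, chip-firing in $H$ that never fires $q$ is exactly firing within $T$ in $G$, and $m\mathbf{1}_T$ is the unique reduced vector in $H$ realizing ``fire each vertex of $T$ exactly $m$ times''. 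Applying Lemma~\ref{l:growing_sets} inside $H$, the forced ascending chain is $m$ copies of $T$, so a witnessing game may be taken ``round-robin'': $m$ consecutive rounds, each firing every vertex of $T$ once. Spelling out the legality of the last round (which, since the per-round demand only tightens, is the hardest), the vertex in position $i$ there can fire iff $z(u_i)\ge B_i+m c_{u_i}$, where $B_i$ counts the edges into $u_i$ from the vertices of $T$ placed after $u_i$. Hence the subroutine succeeds iff there is a linear order $u_1,\dots,u_k$ of $T$ in which each $u_i$ receives at most $w(u_i):=z(u_i)-m c_{u_i}$ edges from later vertices.

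That ordering condition is an elimination-ordering problem: starting from $T'=T$, repeatedly delete any $u\in T'$ whose in-degree inside $T'$ is at most $w(u)$; the deletion order is then a valid order, and a standard exchange argument shows $T'$ empties iff a valid order exists. This is $O(|V|^2)$ arithmetic operations per call. All told, the algorithm runs Proposition~\ref{prop::linequi.strongly.poly} once (strongly polynomial for Eulerian digraphs), then makes at most $|V|$ subroutine calls of $O(|V|^2)$ operations each, updating the $z_i$ in between; every integer handled is an entry of $x$, $y$, $L$, of the polynomially-bounded vector $f$, or of $x+L$ applied to such a vector, so all numbers stay of polynomial size and the space used is polynomial --- giving a strongly polynomial algorithm. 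To summarize the crux: the hard part is making ``fire $T$ exactly $m$ times'' decidable in $\mathrm{poly}(\log m)$ time; this is achieved by using that $G$ is Eulerian (so repeated firing of $T$ shifts the $T$-piles by a fixed vector) and applying the ascending-chain lemma a second time, on the contracted graph, to reduce the $m$-round question to one greedy ordering check.
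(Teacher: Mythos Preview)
Your proposal is correct and lands on the same algorithm as the paper: compute the reduced $f$, read off its level sets, and for each level set $T$ verify a single ``can $T$ be fired once from the distribution right before the last occurrence of $T$'' check, exploiting that on an Eulerian digraph firing $\mathbf{1}_T$ only lowers the $T$-piles. The paper reaches this check directly, by observing that if the last occurrence is legal then so are all earlier ones, and then invoking Lemma~\ref{l:korlatos_jatek_moho} for the one-round greedy; you reach the same check through a slightly longer detour---contracting $V\setminus T$, re-applying Lemma~\ref{l:growing_sets} inside the contracted graph to force a round-robin shape, and then spelling out the greedy ordering by hand---but the final criterion, correctness argument, and running time are the same.
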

\begin{proof} 
  The idea of the proof is to use Lemma \ref{l:growing_sets}:
  $x\leadsto y$ if and only if $y$ can be reached from $x$ by firing an ``ascending chain of vertex sets''. Since the chain is ascending, the number of distinct sets is at most $|V|$ (but one may occur in the  sequence exponentially many times). By Proposition \ref{prop::linequi.strongly.poly}, we can decide whether there exists a reduced vector $f$ such that $y=x+Lf$, and if so, we can compute what these ever growing sets should be. The main idea is that it is enough to check for each set, whether it can be fired at its last occurrence.
  
Let us write this formally.
The algorithm is the following:

Using the procedure of Proposition \ref{prop::linequi.strongly.poly}, we decide whether there exists an integer vector $g \in \mathbb{Z}^V_+$ such that $y = x + L g$. If no such vector exists then $x \not \leadsto y$. If such a $g$ exists, then let $k = \min_{v \in V} g(v)$ and $f = g - k\cdot \mathbf{1}_V$. 
Since $L \mathbf{1}_V = \mathbf{0}$, $y = x + L f$. 
  Moreover, the coordinates of $f$ are nonnegative, and it has a coordinate 
  that is zero. Let $t = \max_{v \in V} f(v)$ and for $1 \le j \le t$, let 
  $S_j = \{v \in V : f(v) \ge t - j + 1\}$. It is easy to see 
  that $S_1 \subseteq S_2 \subseteq \dots \subseteq S_{t} \subsetneq V$.
  Let $k$ be the number of distinct $S_j$'s, and for $1 \le i \le k$ let $a_i$ be the index of the first occurrence of the $i$'th smallest set among the  $S_j$'s. Also, set $a_{k + 1} = t + 1$. With these notations, $S_{j} = S_{a_i}$ if $a_i \le j < a_{i + 1}$. 
  
  Now let $x_1 = x$ and define $x_{j} = x + \sum_{\ell = 1}^{j-1} L \mathbf{1}_{S_\ell}$
  for $j = 1, \dots ,t+1$. 
  We do not compute all of these chip-distributions (as there can be 
  exponentially many), but note that  
  for a fixed $j$, $x_j$ can be 
  computed in polynomial time: If $a_i \le j < a_{i + 1}$ then 
  \[
  x_j = x_1 + L \left((j - a_i) \mathbf{1}_{S_{a_i}} + 
  \sum_{\ell = 1}^{i - 1} (a_{\ell + 1} - a_\ell)\mathbf{1}_{S_{a_\ell}} \right).
  \]
  Now the algorithm proceeds as follows: For each   
  $1 \le i \le k$ compute $x_{a_{i + 1} - 1}$ and $x_{a_{i + 1}}$ and check 
  whether $x_{a_{i + 1} - 1} \leadsto x_{a_{i + 1}}$.
  This can also be done in polynomial time, since by Lemma \ref{l:korlatos_jatek_moho} and Lemma \ref{l:reach_with_reduced} for each $i$ we only need to check greedily whether the firing vector $\mathbf{1}_{S_{a_i}}$ can be fired from initial distribution $x_{a_{i + 1} - 1}$.
  If $x_{a_{i + 1} - 1} \leadsto x_{a_{i + 1}}$ for each $1 \le i \le k$, 
  then the algorithm returns $x \leadsto y$, otherwise the algorithm returns 
  $x \not \leadsto y$.   
  
  Now we prove the correctness of the algorithm.
  First we prove that if the algorithm returns $x \leadsto y$ then 
  $x \leadsto y$.
  
  Note that $f = \sum_{j = 1}^{t} \mathbf{1}_{S_j}$, hence $x_{t + 1} = 
  x + \sum_{j = 1}^{t} L \mathbf{1}_{S_j} = x + L f = y.$
  Thus for proving $x\leadsto y$, it is enough to prove for each $1\leq j\leq 
  t$ that $x_j \leadsto x_{j + 1}$. So let $1 \le j \le t$, then for some $i 
  \le k$, $a_i \le j < a_{i + 1}$. Hence $x_{j + 1} = x_j + L 
  \mathbf{1}_{S_{a_i}}$.  
  
  Since the algorithm returned $x \leadsto y$, we have $x_{a_{i + 1} - 1} \leadsto x_{a_{i + 1}}$. Let $\beta$ be a legal game witnessing 
  this. By Lemma 
  \ref{l:per_vektor_kihagyhato}, we can suppose that $\beta$ has firing vector 
  $\mathbf{1}_{S_{a_i}}$. We prove that $\beta$ is also a legal game starting 
  from the distribution $x_j$. For this, it is enough to show that $x_j(v) \ge 
  x_{a_{i + 1} - 1}(v)$ 
  for each $v \in S_j = S_{a_i}$. But this is true, since $G$ is Eulerian, and 
  $x_{a_{i+1} - 1} = x_j + L \cdot (a_{i + 1} - 1 - j)\mathbf{1}_{S_j}$. 
  Hence each 
  vertex $v \in S_j$ fires $(a_{i + 1} - 1 - j) \cdot d^+(v)$ chips and gains
  \[(a_{i + 1} - 1 - j) \cdot \sum_{u \in S_j} \overrightarrow{d}(u, v) \leq (a_{i + 1} - 1 - j) \cdot d^-(v) = (a_{i + 1} - 1 - j) \cdot d^+(v)
  \]
  chips as we change from $x_j$ to $x_{a_{i + 1} - 1}$. Thus $x_j \leadsto x_{j 
  + 1}$ for every $j \le t$, hence $x \leadsto y$. 

  Now it remains to show that if $x \leadsto y$ then the algorithm returns 
  $x \leadsto y$. 
  Let $f'$ be the firing vector of the legal game $(v_1, v_2, \dots ,v_s)$ from $x$ to $y$ provided by 
  Lemma \ref{l:growing_sets}. One can easily see that $f' = f$, hence the sets 
  $S_1, \dots, S_t$ coincide with those given by the lemma. 
  Consequently, after firing $(v_1,\dots v_{i_{j-1}})$ from initial distribution $x$, we arrive at $x_j$. If we continue the firing of the  legal sequence and fire $(v_{i_{j-1}+1}, \dots v_{i_{j}})$, then we arrive at $x_{j+1}$. Hence $(v_{i_{j-1}+1}, \dots v_{i_{j}})$ is a legal game from initial distribution $x_j$, showing $x_j\leadsto x_{j+1}$.
In particular, $x_{a_{i + 1} - 1} \leadsto x_{a_{i + 1}}$ for each $1 \leq i 
\le k$, hence the algorithm returns $x \leadsto y$. 

The algorithm runs in strongly polynomial time. 
\begin{enumerate}
	\item Preprocessing: computing the (diagonal elements $d^+_i$ of the) Laplacian: $O(|V|^2)$.
	\item The firing vector $f \in \mathbb{Z}^V$ can be determined in strongly polynomial time by Proposition \ref{prop::linequi.strongly.poly}. 
    \item  Having $f$, numbers $a_i$ and characteristic vectors of the sets $S_{a_i}$ can be determined in $O(|V|^2)$. (Note that $k \leq |V|$ as $\emptyset \subsetneq S_{a_1} \subsetneq S_{a_2} \subsetneq \dots \subsetneq S_{a_k} \subsetneq V$ is a strictly ascending chain.)
 	\item Any $x_j$ can be computed in $O(|V|^2)$. We need the vector $x_j$ for at most $2|V|$ instances of $j$.
   \item Checking whether $x_{a_{i + 1} - 1} \leadsto x_{a_{i + 1}}$ needs at most $|S_{a_j}| \leq |V|$ firings. Checking whether a vertex can fire can be done in $1$ step, since the out-degree of every vertex is already computed. Hence we can find in $|V|$ steps a vertex that can legally fire (if there is any). The effect of a firing can be computed in $O(|V|)$ steps. This means altogether $O(|V|^2)$ steps. 
\end{enumerate}
Note that the running time of the algorithm is dominated by the time required for computing the inverse of $L_v$, which is needed in Step (2).
\end{proof}

\section{General digraphs}

Unfortunately, the algorithm of Section \ref{s:Euler_multi} is not valid for non-Eulerian digraphs. It is conjectured by Bj\"orner and Lov\'asz in \cite{BL92} that the reachability problem is \NP-hard for general digraphs. In this section, we give two positive results: We show that the reachability problem is in \coNP, and we show a special case when it is decidable in polynomial time for general digraphs.

\begin{thm} Let $G$ be a digraph (with possibly multiple edges) and $x,y \in \Chip(G)$. Then deciding whether $x\leadsto y$ is in \coNP.
\end{thm}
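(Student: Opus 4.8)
The goal is to produce, for every non-reachable pair $(x,y)$, a polynomially bounded certificate of $x\not\leadsto y$; equivalently, to place the complement of the reachability problem in $\NP$. I would split according to the necessary conditions of Subsection~2.4. If there is no $f\in\mathbb{Z}_+^V$ with $y=x+Lf$, then by Proposition~\ref{prop::linequi.strongly.poly} this is decidable in polynomial time, so it is a valid certificate by itself. Otherwise let $f$ be the unique reduced nonnegative vector with $y=x+Lf$; it has polynomially many bits, being obtained from solving a linear system. By Lemmas~\ref{l:reach_with_reduced} and~\ref{l:korlatos_jatek_moho}, $x\leadsto y$ holds if and only if the bounded chip-firing game with upper bound $f$ started from $x$ realizes the firing vector $f$, and the firing vector $f^\ast$ of \emph{any} maximal such bounded game is the same. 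Hence $x\not\leadsto y$ is equivalent to $f^\ast\lneq f$, and the backbone of the certificate will be the vector $f^\ast$, which again has polynomially many bits since $\mathbf{0}\le f^\ast\le f$.

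The certificate should let the verifier confirm in polynomial time two facts: (i) $f^\ast$ is the firing vector of some legal game from $x$; and (ii) from the distribution $x+Lf^\ast$ every vertex $v$ with $f^\ast(v)<f(v)$ is stuck, i.e.\ $(x+Lf^\ast)(v)<d^+(v)$. Granting (i) and (ii), the legal game realizing $f^\ast$ is maximal as a bounded game with upper bound $f$ (the under-fired vertices cannot fire and the rest are already at their bound), so $f^\ast$ is the common maximal firing vector and $x\not\leadsto y$. Part (ii) is an immediate computation given $f^\ast$, $x$ and the out-degrees.

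The real work, and what I expect to be the main obstacle, is certifying (i): a legal game realizing $f^\ast$ may consist of exponentially many firings, so the game itself cannot be written out. The natural fix is to append to the certificate a \emph{compressed} realizing game: a polynomial-length list of ``phases'', each a pair $(S_i,m_i)$ with $S_i\subseteq V$ and $m_i\in\mathbb{Z}_+$ written in binary, interpreted as firing each vertex of $S_i$ exactly $m_i$ times. The verifier checks that the phases compose to firing vector $f^\ast$ and that each phase is legal from the configuration reached so far; the latter is polynomial-time checkable much as in the proof of the previous theorem, using that within a phase the relevant coordinates of the configuration change linearly in the repetition count, so that one only has to test a worst-case round and its minimum over $0\le r\le m_i$. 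What must be proved is that whenever $f^\ast$ is realizable it is realizable by such a polynomially-describable game. In the Eulerian case this is exactly Lemma~\ref{l:growing_sets} (the phases form an ascending chain), but that lemma genuinely fails for non-Eulerian strongly connected digraphs, whose period vectors are positive yet non-constant, so a different argument is needed. I would attempt a decomposition of $G$ into strongly connected components taken in topological order: once the components ``upstream'' of a component $C$ have been processed, the evolution inside $C$ depends only on $x|_C$, a fixed (polynomially representable) total inflow from upstream, and the firings of $C$ itself, and one can try to compress the firings inside $C$ by an ascending-chain argument relative to the positive period vector $p_C$. Bounding the bit-sizes of the multiplicities $m_i$ and of the inter-component chip flows, and verifying that this compression survives the interaction between components, is the delicate point; if it cannot be pushed through one would instead have to look for a polyhedral or number-theoretic obstruction to the realizability of $f$ that is itself verifiable in polynomial time.
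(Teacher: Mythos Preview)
Your certificate $(f,f^\ast)$ is exactly the right one, and your condition (ii) is the right condition; but the step you flag as ``the real work'' --- certifying (i), that $f^\ast$ is the firing vector of some legal game from $x$ --- is not needed at all. This is the gap in your proposal: you believe the verifier must confirm that $f^\ast$ arose from a legal game, and you then embark on a search for compressed game descriptions that you yourself admit may not succeed for non-Eulerian digraphs.

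The paper sidesteps this entirely. It takes as certificate any pair $(f,g)$ with $f$ reduced, $y=x+Lf$, $\mathbf{0}\le g\le f$, $g\ne f$, and your condition (ii): every $v$ with $g(v)<f(v)$ satisfies $(x+Lg)(v)<d^+(v)$. All of this is checkable by arithmetic alone. The existence direction is your argument: take $g=f^\ast$. The point you miss is the converse direction, which does \emph{not} use that $g$ is legally realizable. Suppose for contradiction that $x\leadsto y$, so some legal sequence $(v_1,\dots,v_t)$ has firing vector $f$. Let $j$ be maximal with partial firing vector $h=\sum_{i\le j}\mathbf{1}_{v_i}\le g$; then $h(v_{j+1})=g(v_{j+1})<f(v_{j+1})$. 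Since $g-h\ge\mathbf{0}$ and $(g-h)(v_{j+1})=0$, the diagonal entry of $L$ in row $v_{j+1}$ contributes nothing, so $(L(g-h))(v_{j+1})\ge 0$ and hence $(x+Lg)(v_{j+1})\ge (x+Lh)(v_{j+1})\ge d^+(v_{j+1})$, contradicting condition (ii). Thus (ii) alone certifies $x\not\leadsto y$, and your attempt to compress a legal realization of $f^\ast$ is unnecessary.
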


\begin{proof}
As we noted in Section \ref{s:prelim}, the existence of $f\in \mathbb{Z}_+^V$ such that $y=x+Lf$ is a necessary condition for $x \leadsto y$, that can be checked in polynomial time. Hence in case there exists no $f\in \mathbb{Z}_+^V$ such that $y=x+Lf$, our certificate for $x \not\leadsto y$ is simply the statement that there exists no $f\in \mathbb{Z}_+^V$ such that $y=x+Lf$.

In case there exists $f\in \mathbb{Z}_+^V$ such that $y=x+Lf$, our certificate is a pair of vectors $f,g \in \mathbb{Z}^V$ satisfying the following properties. 
\begin{enumerate}
	\item $x + Lf = y$, $f \geq \mathbf{0}$ and $f\not\geq p$ for any non-zero period vector $p$ of $G$;
	\item $\mathbf{0} \leq g \leq f$, and there exists $v \in V$ such that $g(v) < f(v)$;
    \item For any $v \in V$, $g(v) = f(v)$ or
    $x_g(v) < d^+(v)$, where $x_g = x + L g$.
\end{enumerate}
All three conditions can be checked in polynomial time.

We claim that if $x \not\leadsto y$ and there exists $f\in \mathbb{Z}_+^V$ such that $y=x+Lf$ then such $f$ and $g$ exist.
Firstly, by subtracting an appropriate period vector of $G$ from the vector $f\in\mathbb{Z}_+^V$ with $y=x+Lf$, we can ensure that $f \geq \mathbf{0}$ and $f\not\geq p$ for any non-zero period vector $p$ of $G$.

Let $g$ be the firing vector of the maximal bounded chip-firing game from initial distribution $x$ with upper bound $f$.
By Lemma \ref{l:korlatos_jatek_moho}, $g$ is well defined.
By the definition of the bounded game, $\mathbf{0}\leq g\leq f$. If $g=f$ then $x\leadsto y$, hence if $x\not\leadsto y$, then necessarily there exists $v\in V$ such that $g(v)<f(v)$.
The third condition follows because $g$ is the firing vector of a maximal game with upper bound $f$.

Now we prove that if such an $f$ and $g$ exist then $x \not\leadsto y$. Suppose for a contradiction that $x \leadsto y$. By Lemma \ref{l:reach_with_reduced}, there exists a legal sequence $(v_1,v_2,\ldots,v_t)$ of firings with firing vector $f$ that leads from $x$ to $y$. Let $j$ be the largest index such that $\sum_{i = 1}^j \mathbf{1}_{v_i} \leq g$. Let $h$ be the firing vector of the sequence $(v_1,v_2,\ldots,v_j)$ and let $x_h = x + Lh$. By the choice of $j$, $g \geq h$ and $g(v_{j+1}) = h(v_{j+1}) < f(v_{j+1})$, hence 
\[
	x_g(v_{j+1}) - x_h(v_{j+1}) \geq L (g-h)(v_{j+1}) \geq 0.
\]
Since $(v_1,v_2,\ldots,v_j,v_{j+1})$ is a legal sequence of firings, we get 
\[
	d^+(v_{j+1}) \leq x_h(v_{j+1}) \leq x_g(v_{j+1}),
\] 
contradicting Condition 3.
\end{proof}

\subsection{Reachability of recurrent distributions}
\label{s:reach_recurr}

In this section, we show a case when the reachability problem can be decided in polynomial time also for general digraphs. More exactly, we give a case where the necessary condition of $x \sim y$ is also sufficient for $x \leadsto y$.
Our theorem uses the notion of recurrent chip-distributions.

\begin{defn}
We call a chip-distribution $x\in \Chip(G)$ \emph{recurrent} if there exists a non-empty sequence of legal firings that transforms $x$ to itself.
\end{defn}

\begin{thm} \label{thm::recurrent_reachability}
Let $G$ be a strongly connected digraph and $x,y \in \Chip(G)$. If $y$ is recurrent and $x \sim y$, then $x \leadsto y$.
\end{thm}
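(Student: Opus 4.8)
The plan is to show that if $y$ is recurrent and $x \sim y$, then in fact $x \leadsto y$, by first reducing to the case where $x$ and $y$ have a particularly convenient relationship, and then exploiting the recurrence of $y$ to ``absorb'' the firing. Since $G$ is strongly connected, $x \sim y$ is equivalent to the existence of $f \in \mathbb{Z}_+^V$ with $y = x + Lf$ (using that $p_G$ is strictly positive, we may add a large multiple of $p_G$ to make $f$ nonnegative). By Lemma~\ref{l:korlatos_jatek_moho}, play the bounded chip-firing game from $x$ with upper bound $f$; let $g$ be its (well-defined) firing vector, so $\mathbf{0} \le g \le f$. Set $x' = x + Lg$. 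Then $x \leadsto x'$, and it suffices to prove $x' \leadsto y$. By maximality of the bounded game, for every vertex $v$ with $g(v) < f(v)$ we have $x'(v) < d^+(v)$; in other words, $x'$ together with the ``remaining'' firing vector $f' := f - g \ge \mathbf{0}$ satisfies $y = x' + Lf'$, but no vertex in the support of $f'$ can legally fire from $x'$. If $f' = \mathbf{0}$ we are done, so assume $f' \neq \mathbf{0}$.

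Now the key idea: use recurrence of $y$. Since $y$ is recurrent, there is a non-empty legal game from $y$ back to $y$; its firing vector $r$ satisfies $Lr = \mathbf{0}$ and $r \ge \mathbf{0}$, $r \neq \mathbf{0}$, so by Proposition~\ref{prop::period}, $r$ is a positive multiple of $p_G$, hence $r \ge p_G$ and $r$ is strictly positive. The plan is to iterate this recurrent game many times to build up a large ``reservoir'' of chips, and then run it backwards so as to realize the firing vector $f'$ on the way. More precisely, I would argue: for a suitable integer $N$, consider the legal game from $y$ that fires $r$ exactly $N$ times, returning to $y$; call its firing vector $Nr$. I want to find, inside (or as a rearrangement of) the concatenation of $x' \leadsto x'$-type moves, a legal game whose firing vector is $f'$. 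The cleanest route is to show directly that $x' \leadsto y$ by exhibiting a legal game: start from $x'$, and observe that $y = x' + Lf'$ where $f' \le Nr$ for $N$ large enough (since $r$ is strictly positive and $f'$ is a fixed nonnegative vector). Then consider the legal cyclic game $y \to y$ with firing vector $Nr$: reading this game of firings as a sequence $(w_1, \dots, w_m)$, I claim some rotation of it, started from $x'$, is legal and has a prefix with firing vector exactly $f'$.

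To make this work I would use the following standard ``cycle rotation'' argument. The sequence $(w_1,\dots,w_m)$ is legal from $y$ and returns to $y$. Consider running it instead from the distribution $x'' := y - Lf' = x'$ — wait, that is just $x'$ again; the point is that $x' = y - L f'$ and $f' \le Nr$, so $x'$ is obtainable from the cyclic game's configurations by ``pre-firing'' $f'$. Concretely: in the cyclic game from $y$, let $\sigma$ be the firing sequence and let $h_k = \sum_{i\le k}\mathbf 1_{w_i}$ be the partial firing vectors; since $h_m = Nr \ge f'$ (for $N$ large) and the $h_k$ increase by one coordinate at a time from $\mathbf 0$ to $Nr$, there need not be a $k$ with $h_k = f'$ exactly, so instead I would restrict the cyclic game to the upper bound $f'$: by Lemma~\ref{l:korlatos_jatek_moho} the bounded game from $y$ with bound $f'$ has a well-defined firing vector; one shows this firing vector is $f'$ itself, because the configurations encountered along the full cyclic game all have $\ge y$ many chips at each relevant vertex (here is where Eulerian-type monotonicity is replaced by: along $y \to y$, every vertex's chip count returns to its original value, and one can time-reverse). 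Then the bounded game from $y$ with bound $f'$ reaches $y + Lf'$...

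The honest assessment: \textbf{the main obstacle} is precisely the last reduction — showing that the ``remaining'' firing vector $f'$ can actually be legally fired from $y$ (equivalently, from $x'$, going the other way), rather than merely satisfying the linear equation. The recurrence of $y$ gives a strictly positive cyclic firing vector, and the intuition is that $y$ has ``enough chips everywhere'' in the sense that it participates in an infinite legal game; the technical heart is to convert this into a legal realization of the specific vector $f'$, most likely via a cycle-rotation or time-reversal argument on the cyclic game combined with Lemma~\ref{l:per_vektor_kihagyhato} to keep firing vectors reduced. I would expect the final proof to run the bounded game from $x'$ (not $y$) with the bound $f'$, show its firing vector must be all of $f'$ by assuming otherwise and deriving — via the strictly positive period vector and the recurrence — that some blocked vertex could in fact fire, contradicting maximality; this mirrors the structure of the \coNP proof above but uses recurrence to rule out the ``stuck'' case.
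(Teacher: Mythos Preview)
Your proposal does not complete the proof; you correctly identify the obstacle and then do not overcome it. After reducing to the configuration $y = x' + Lf'$ with $f' \geq \mathbf{0}$ and no vertex of $\mathrm{supp}(f')$ able to fire from $x'$, none of your suggested routes (cycle rotation, bounded game from $y$ with bound $f'$, contradiction via a blocked vertex) is actually carried through, and none of them works in the form stated. For instance, even if the bounded game from $y$ with bound $f'$ fired all of $f'$, that would establish $y \leadsto y + Lf' = 2y - x'$, which is irrelevant to $x' \leadsto y$; and the ``assume a blocked vertex and derive a contradiction'' plan is exactly the stuck configuration you are already in --- recurrence of $y$ gives no direct information about which vertices can fire at $x'$.

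The paper's proof sidesteps this dead end by reversing the orientation of $f$: it writes $x = y + Lf$ with $f \in \mathbb{Z}_+^V$ (possible since $p_G$ is strictly positive) and inducts on $\sum_v f(v)$. The payoff of this reversal is that every vertex $v$ with $f(v) = 0$ satisfies $x(v) = y(v) + (Lf)(v) \geq y(v)$, because the only negative entry in row $v$ of $L$ is the diagonal one. Now take a legal cyclic sequence $(v_1, \dots, v_k)$ from $y$ back to $y$ (every vertex occurs, since its firing vector is a positive multiple of $p_G$), and let $i$ be minimal with $f(v_i) > 0$. Then $f(v_1) = \cdots = f(v_{i-1}) = 0$, so $x(v_j) \geq y(v_j)$ for $j < i$, and the prefix $(v_1, \dots, v_{i-1})$ is legal from $x$ as well, landing at some $x'$. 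The distribution $y''$ obtained from $y$ by firing $(v_1, \dots, v_i)$ is still recurrent, and $x' = y'' + L(f - \mathbf{1}_{v_i})$, so induction gives $x' \leadsto y''$; since $x \leadsto x'$ and $y'' \leadsto y$, transitivity finishes. The idea you are missing is precisely this sign flip: instead of trying to push chips from $x$ toward $y$, one writes the difference the other way, which converts the ``stuck'' coordinates into coordinates where $x$ dominates $y$ and hence where $y$'s recurrent game can be replayed from $x$.
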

\begin{proof}
First we claim that if $x \sim y$ then there exists $f \in \mathbb{Z}_+^V$ such that $x=y+Lf$. Indeed, $x \sim y$ implies the existence of $g\in \mathbb{Z}^{V}$ with $x=y+Lg$. From Proposition \ref{prop::period}, for a sufficiently large $k \in \mathbb{Z_+}$, the vector $f = g + kp_G$ is nonnegative, while $x = y + Lg = y + Lg + k L p_G = y + Lf$ holds. 

Fix such an $f$. We proceed by induction on $\sum_{v\in V(G)}f(v)$. If $\sum_{v\in V(G)}f(v)=0$, then $x=y$, thus $x\leadsto y$.
Now suppose $\sum_{v\in V(G)}f(v)>0$. As $y$ is recurrent, there exists a sequence $(v_1,v_2,\dots,v_k)$ of legal firings from initial distribution $y$ (a vertex may occur multiple times), that leads back to $y$. Fix such a sequence.
We claim that in this sequence, each vertex occurs at least once. Indeed, for the firing vector $g$ of the game,  $y=y+Lg$ thus $g$ is a multiple of the primitive period vector, and the primitive period vector of a strongly connected digraph is strictly positive, as claimed in Proposition \ref{prop::period}. 

Let $i$ be the smallest index such that $f(v_i)>0$. Such an index exists because each vertex is listed at least once in $v_1,v_2,\dots,v_k$. Starting from $y$, fire the vertices $v_1, \dots , v_{i-1}$. This is a legal game by definition. Let the resulting distribution be $y'$. We claim that the sequence of firings $v_1, \dots , v_{i-1}$ is also legal starting from $x$. To prove this, it is enough to show that $x(v_j)\geq y(v_j)$ for all $1\leq j\leq i-1$. 
This is true, because $x(v_j) = y(v_j) + (Lf)(v_j)$, 
where $(Lf)(v_j) \ge 0$, since the only negative 
element in the row corresponding to $v_j$ is $L(v_j,v_j)$, but $f(v_j) = 0$. Hence the firing of the vertices $v_1, \dots , v_{i-1}$ from distribution $x$ is legal. Let the distribution obtained by this game be $x'$. Thus $x\leadsto x'$.

For $x'$ and $y'$, we also have $x'=y'+Lf$. At position $y'$, firing $v_i$ is legal, by definition of the sequence $v_1, \dots , v_k$. Denote by $y''$ the distribution we get by firing $v_i$ at $y'$. The distribution $y''$ is recurrent, since firing $v_{i+1} \dots, v_k, v_1, \dots, v_i$ is a legal game that leads back to $y''$.
Now for $x'$ and $y''$ we have $x'=y''+Lf'$, where  $f'=f-\mathbf{1}_{v_i}$.
This way $\sum_{v\in V(G)} f'(v)=\sum_{v\in V(G)} f(v)-1$, hence by the induction hypothesis, $x'\leadsto y''$. 

We claim that $y''\leadsto y$. Indeed, firing $v_{i+1}, \dots, v_k$ starting from $y''$ is a legal game that leads to $y$. We also have $x\leadsto x'$. 
By transitivity, we have $x\leadsto y$.
\end{proof}

This theorem raises the question of the complexity of deciding whether a given chip-distribution is recurrent. By results of Bj\"orner and Lov\'asz (Lemmas \ref{l:korlatos_jatek_moho} and \ref{l:per_vektor_kihagyhato}), a chip-distribution $x$ is recurrent if and only if there exists a nonzero primitive period vector $p$, such that started from $x$, the maximal chip-firing game with upper bound $p$ has firing vector $p$. For Eulerian digraphs, this can be checked in polynomial time (even if the digraph has multiple edges). However, for general digraphs, the complexity of deciding recurrence is open.

Our aim is now to generalize Theorem \ref{thm::recurrent_reachability} for weakly connected digraphs. 
Here, we need to use the stronger necessary condition of the existence of a nonnegative $f$ such that $y=x+Lf$. 
However, the condition of $y$ being recurrent is not enough to make this condition sufficient on general digraphs.
We show this by an example at the end of this section (Example \ref{e:recurr}).  
With a somewhat stronger condition, however, we can generalize Theorem \ref{thm::recurrent_reachability} to weakly connected digraphs.

\begin{thm} \label{thm::elerhetoseg_elegs_felt_alt_graf}
Let $G$ be a weakly connected digraph, and $x, y \in \Chip(G)$ be two 
chip-distri\-bu\-ti\-ons such that there exists $f \in \mathbb{Z}_+^V$ with $y 
= x + Lf$. Suppose that for each strongly connected component $G'= (V', E')$ of 
$G$, $f|_{V'} =\mathbf{0}$ or $y|_{V'} \in \Chip(G')$ is recurrent. 
  Then $x \leadsto y$.
\end{thm}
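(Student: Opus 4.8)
The plan is to induct on $\sum_{v\in V}f(v)$ and reduce everything to producing one legal firing. If $f=\mathbf 0$ then $y=x$ and there is nothing to prove. Otherwise it suffices to find a vertex $v$ with $f(v)>0$ and $x(v)\ge d^+(v)$: firing such a $v$ replaces the data $(x,f)$ by $(x+L\mathbf 1_v,\,f-\mathbf 1_v)$, and this new data again satisfies the hypotheses — the identity $y=x+Lf$ becomes $y=(x+L\mathbf 1_v)+L(f-\mathbf 1_v)$, we still have $f-\mathbf 1_v\ge\mathbf 0$, and the per-component recurrence assumption is inherited since $f-\mathbf 1_v\le f$ — while $\sum_v(f-\mathbf 1_v)(v)<\sum_vf(v)$, so the induction hypothesis finishes the argument. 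Thus the theorem reduces to the following statement, with $S:=\{u\in V:f(u)>0\}$: \emph{if $y=x+Lf$ with $\mathbf 0\ne f\ge\mathbf 0$ and $y|_{V'}$ is recurrent in $G'$ for every strongly connected component $G'$ with $f|_{V'}\ne\mathbf 0$, then some $v\in S$ has $x(v)\ge d^+(v)$.}

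To prove this I would pick a strongly connected component $G'=(V',E')$ of $G$ that meets $S$ and is not reachable in $G$ from any other strongly connected component meeting $S$; such a $G'$ exists, and by this choice every vertex $u\notin V'$ having an edge into $V'$ satisfies $f(u)=0$. Restricting $y=x+Lf$ to $V'$ and using this, one rewrites it for $v\in V'$ as $y(v)=x(v)+\bigl(L_{G'}(f|_{V'})\bigr)(v)-\mu(v)f(v)$, where $L_{G'}$ is the Laplacian of $G'$ and $\mu(v):=d^+_G(v)-d^+_{G'}(v)\ge 0$ counts the edges leaving $V'$ at $v$. Suppose, for a contradiction, that $x(u)<d^+(u)$ for every $u\in S$; in particular $x(v)<d^+_{G'}(v)+\mu(v)$ for every $v\in V'$ with $f(v)>0$. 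Writing $\zeta:=y|_{V'}$ (recurrent in $G'$) and $\phi:=f|_{V'}$ (nonzero, $\ge\mathbf 0$), the rewritten identity reads $x(v)=(\zeta-L_{G'}\phi)(v)+\mu(v)\phi(v)$ on $V'$, so it is enough to prove the following.

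\begin{claim*}
If $G_0$ is strongly connected, $\zeta\in\Chip(G_0)$ is recurrent and $\phi\in\mathbb Z_+^{V(G_0)}$ is nonzero, then some vertex $v$ with $\phi(v)>0$ satisfies $(\zeta-L_{G_0}\phi)(v)\ge d^+_{G_0}(v)$.
\end{claim*}

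\noindent Indeed, the vertex $v$ it provides has $\phi(v)\ge 1$, hence $x(v)=(\zeta-L_{G'}\phi)(v)+\mu(v)\phi(v)\ge d^+_{G'}(v)+\mu(v)=d^+_G(v)$, contradicting $x(v)<d^+_G(v)$. For the claim itself: if $\phi$ is a multiple of the primitive period vector $p_{G_0}$ then $L_{G_0}\phi=\mathbf 0$ and $\phi$ is everywhere positive, so the assertion just says $\zeta$ admits a legal firing, which holds as $\zeta$ is recurrent. Otherwise recurrence gives a legal sequence of firings $(a_1,\dots,a_M)$ from $\zeta$ back to $\zeta$ with firing vector $c_0p_{G_0}$; repeating it we may assume $\psi:=c_0p_{G_0}$ satisfies $\psi\ge\phi$ coordinatewise. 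Let $h_j$ be the firing vector of $(a_1,\dots,a_j)$, so $\mathbf 0=h_0\le h_1\le\dots\le h_M=\psi$ and each $h_j\le\psi$. Since $\psi-h_0\ge\phi$ but $\psi-h_M=\mathbf 0\not\ge\phi$, there is a least $j^\ast$ with $\psi-h_{j^\ast}\not\ge\phi$; as $h_{j^\ast}$ differs from $h_{j^\ast-1}$ only in the coordinate $v^\ast:=a_{j^\ast}$, one obtains $\phi(v^\ast)\ge 1$, $g:=h_{j^\ast-1}\le\psi-\phi$, and $g(v^\ast)=\psi(v^\ast)-\phi(v^\ast)$. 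Legality of the $j^\ast$-th firing gives $(L_{G_0}g)(v^\ast)\ge d^+_{G_0}(v^\ast)-\zeta(v^\ast)$, while from $L_{G_0}\psi=\mathbf 0$ and $g(v^\ast)+\phi(v^\ast)=\psi(v^\ast)$ one computes $\bigl(L_{G_0}(g+\phi)\bigr)(v^\ast)=\sum_{u}\overrightarrow d_{G_0}(u,v^\ast)\bigl(\phi(u)-(\psi-g)(u)\bigr)\le 0$ because $\psi-g\ge\phi$. Subtracting the two inequalities yields $(L_{G_0}\phi)(v^\ast)\le\zeta(v^\ast)-d^+_{G_0}(v^\ast)$, i.e.\ $(\zeta-L_{G_0}\phi)(v^\ast)\ge d^+_{G_0}(v^\ast)$ with $\phi(v^\ast)>0$, proving the claim.

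The genuinely delicate point is this claim. The tempting shortcut — ``un-fire $\phi$ from $\zeta$ to reach $\zeta-L_{G_0}\phi$ and invoke Theorem~\ref{thm::recurrent_reachability}'' — fails, since $\zeta-L_{G_0}\phi$ need not be a nonnegative configuration and one cannot run a legal game backwards from $\zeta$. What replaces it is the idea of reading off the firing of the recurrent cycle of $\zeta$ that occurs exactly when the complementary firing vector $\psi-h_j$ first ceases to dominate $\phi$, together with the identity $L_{G_0}p_{G_0}=\mathbf 0$; the outer induction, the passage to the component $G'$, and the bound $h_j\le\psi$ are then routine.
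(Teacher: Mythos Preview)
Your proof is correct, and it takes a genuinely different route from the paper's.

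The paper's argument is component-wise: it fixes a topological ordering $V_1,\dots,V_k$ of the strongly connected components, introduces an auxiliary distribution $x'$ obtained from $x$ by ``pre-sending'' all the cross-component chips dictated by $f$, then invokes Theorem~\ref{thm::recurrent_reachability} on each $G_i$ to produce a legal game on $G_i$ with firing vector exactly $f|_{V_i}$, and finally shows that these games can be concatenated (in topological order) into a legal game on $G$. So the paper reduces to the strongly connected theorem as a black box and handles the gluing via $x'$ and a careful chip-count.

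Your argument is a single global induction on $\sum_v f(v)$: you reduce to finding one legally fireable vertex in $S=\{f>0\}$, locate it inside a source component $G'$ among those meeting $S$, and prove a sharpened version of the key step of Theorem~\ref{thm::recurrent_reachability} --- your Claim --- that does \emph{not} assume $\zeta-L_{G_0}\phi\ge\mathbf 0$. This strengthening is exactly what absorbs the ``leakage'' term $\mu(v)\phi(v)$ coming from edges leaving $V'$, so you never need the auxiliary $x'$ or the topological bookkeeping. Your proof of the Claim (find the first moment the residual firing budget $\psi-h_j$ ceases to dominate $\phi$, then use $L_{G_0}\psi=\mathbf 0$) is also a bit slicker than the paper's proof of Theorem~\ref{thm::recurrent_reachability}, which instead fires a prefix of zeros from $x$ before peeling off one unit of $f$.

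Trade-offs: the paper's proof is more modular --- the strongly connected case stands on its own and is reused verbatim --- while yours is more self-contained and avoids introducing $x'$. Your Claim is also of independent interest: it says that for a recurrent $\zeta$ and any nonzero $\phi\ge\mathbf 0$, the (possibly negative) configuration $\zeta-L_{G_0}\phi$ always has a fireable vertex in the support of $\phi$, which is strictly stronger than what Theorem~\ref{thm::recurrent_reachability} asserts.
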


\begin{proof}
  Fix a nonnegative vector $f \in \mathbb{Z}_+^V$ with $y = x + Lf$.
  Let $V_1, V_2, \dots, V_k$ be a topological ordering of the strongly 
  connected components of $G$, i.e., $V=V_1 \cup \dots \cup V_k$, for each $i$ the digraph
  $G_i = (V_i, E|_{V_i\times V_i})$ is strongly connected, and there is 
  no directed edge from $v_i \in V_i$ to $v_j \in V_j$ if $i > j$. 
  
  Let $x'$ be the chip-distribution obtained from $x$ by passing 
  $f(u)\cdot \overrightarrow{d}(u, v)$ chips from $u$ to $v$ for 
  each pair of vertices $u, v \in V$ where $u$ and $v$ are in different 
  strongly connected components. Note that
  $x \not \sim x'$ is possible. The proof of the theorem is based on the following 
  lemma. 
  \begin{lemma}
    For each $i$, $x'|_{V_i} \sim y|_{V_i}$ on the 
    digraph $G_i$. 
    Moreover, if $y|_{V_i}$ is recurrent on $G_i$, then
    there exists a legal game on $G_i$ with firing 
    vector $f|_{V_i}$ that 
    transforms $x'|_{V_i}$  to $y|_{V_i}$.
  \end{lemma}
  \begin{proof}
    Let $L_i$ be the Laplacian matrix of $G_i$. We first prove 
    that $x'|_{V_i} \sim y|_{V_i}$ (as chip-distributions on $G_i$) by showing that 
    $x'|_{V_i} + L_i f|_{V_i} = y|_{V_i}$. 
    For this, let $v \in V_i$. 
    Then 
    \begin{equation*}
    \begin{split}
      x'(v) + (L_i f|_{V_i})(v) = \\ 
      x(v) + \sum_{v' \in V \setminus V_i} \left( 
        \overrightarrow{d}(v', v) \cdot f(v') - 
        \overrightarrow{d}(v, v') \cdot f(v) \right) 		+ (L_i f|_{V_i})(v) = \\ 
      x(v) + (Lf)(v) = y(v). 
    \end{split}
    \end{equation*}
    
    Now, if $y|_{V_i}$ is recurrent, by Theorem \ref{thm::recurrent_reachability},
    $x'|_{V_i}\leadsto y|_{V_i}$ in $G_i$.        
    Let $g_i\in \mathbb{Z}^{V_i}$ be the firing vector of a legal game transforming $x'|_{V_i}$ to $y|_{V_i}$. 
    Then $L_i (f|_{V_i} - g_i) = 0$, hence by Proposition \ref{prop::period}, $g_i - f|_{V_i} = c \cdot p_{G_i}$ with $c \in \mathbb{Z}$.
    If $c=0$ then $f|_{V_i}$ is the firing vector of a legal game, proving the lemma. 
    In the followings, we treat separately the cases  
    $c < 0$ and $c > 0$. 
    
    Suppose that $c < 0$. Since $y|_{V_i}$ is recurrent, there is a legal game on $G_i$ that transforms $y|_{V_i}$ back to itself. For the firing vector $g$ of this game, $L_i g=0$, hence $g=\lambda \cdot p_{G_i}$ with $\lambda\in \mathbb{Z}$, $\lambda>0$. 
By Lemma \ref{l:per_vektor_kihagyhato}, we can suppose that $\lambda=1$.
Now starting from distribution $x'|_{V_i}$ on $G_i$,    after playing the legal game with firing vector $g_i$, we get to the distribution $y|_{V_i}$.
Then iterate $-c$ times the legal game with firing vector $p_{G_i}$. This gives us a legal game with firing vector $f|_{V_i}$, finishing the proof for the $c < 0$ case.
    
Now suppose that $c>0$.
Then Lemma \ref{l:per_vektor_kihagyhato} guarantees that there is a legal game from $x'|_{V_i}$ with firing vector $g_i-c\cdot p_{G_i}=f|_{V_i}$.
This finishes the proof of the lemma.
\end{proof}

For each $1 \le i \le k$ let $f_i$ be the vector with $f_i(v) = f(v)$ if $v \in V_i$, and $f_i(v) = 0$ otherwise.
Let $s_i=\sum_{j\le i}f_j$, i.e., $s_i(v) = f(v)$ if $v \in \bigcup_{j\le i} V_j$, and $s_i(v) = 0$ otherwise. Let $x_i = x + L s_i$ and $x_0 = x$. 
We show that for $i =1, \dots, k$, starting from the distribution $x_{i - 1}$, there is a legal game on $G$ with firing vector $f_i$.
Since $x_{i-1} + L f_i=x_i$, and $x_k = y$, this is enough to finish the proof of the theorem. 

  So let $i$ be fixed. It is easy to see that for each $v\in V_i$ 
  \begin{equation}
  \label{e:1}
  x'(v) = x_{i - 1}(v) - 
  f(v) \cdot \sum_{v' \in V \setminus V_i} 
  \overrightarrow{d}(v, v').  
  \end{equation}
  
If $f|_{V_i} = \mathbf{0}_{V_i}$, then $f_i = \mathbf{0}$, hence we have nothing   to prove. If this is not the case, then $y|_{V_i}$ is recurrent by the assumptions. 
Using the lemma, from initial distribution $x'|_{V_i}$ there exists a legal game on $G_i$ with firing vector $f|_{V_i}$.
We claim that the same sequence of firings on $G$, with initial distribution $x_{i-1}$ remains a legal game. 
Indeed, we can see from \eqref{e:1} that by playing the game on $G$ from initial distribution $x_{i-1}$, at any moment we have a distribution that is greater or equal on $V_i$ than the distribution we get by playing the game on $G_i$ with initial distribution $x'|_{V_i}$.
Hence there exists a legal game on $G$ with initial distribution $x_{i-1}$ and firing vector $f_i$. This finishes the proof of the theorem. 
\end{proof}

Note that for distributions $y$ such that $y$ is recurrent restricted to each strongly connected component, Theorem \ref{thm::elerhetoseg_elegs_felt_alt_graf} gives a necessary and sufficient condition for the reachability of $y$. 
The condition of the theorem, i.e.~whether there exists $f \in \mathbb{Z}_+^V$ with $y = x + Lf$, can also be decided in polynomial time, as discussed in Section \ref{s:prelim}.

\begin{exmp} \label{e:recurr}
Now we give an example showing that Theorem \ref{thm::recurrent_reachability} does not remain true for general digraphs, i.e.~for general digraphs, the existence of $f\in \mathbb{Z}^V_+$ such that $y=x+Lf$ and 
$y$ being recurrent is not sufficient for $x\leadsto y$. 

Let $G$ be the following digraph:
$$V(G)=\{v_1,v_2,v_3,v_4,v_5,v_6\}$$
$$E(G)=\{\overrightarrow{v_1 v_2}, \overrightarrow{v_2 v_1},\overrightarrow{v_2 v_3}, \overrightarrow{v_3 v_2}, \overrightarrow{v_3 v_4}, \overrightarrow{v_4 v_3}, \overrightarrow{v_4 v_1}, \overrightarrow{v_1 v_4}, \overrightarrow{v_3 v_5}, \overrightarrow{v_4 v_6}, \overrightarrow{v_5 v_6}, \overrightarrow{v_6 v_5}\}$$

Let $x=(1,1,0,0,1,0)$ and $y=(0,0,1,1,1,0)$.
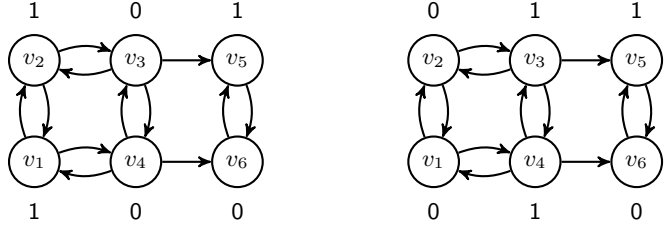
\begin{figure}[h]
\begin{center}
\begin{tikzpicture}[->,>=stealth',auto,scale=1.35,
                    thick,every node/.style={circle,draw,font=\sffamily\small}]
  \node[label=below:1] (1) at (-1, 0) {\small $v_1$};
  \node[label=above:1] (2) at (-1, 1) {\small $v_2$};
  \node[label=above:0] (3) at (0, 1) {\small $v_3$};
  \node[label=below:0] (4) at (0, 0) {\small $v_4$};
  \node[label=above:1] (5) at (1, 1) {\small $v_5$};
  \node[label=below:0] (6) at (1, 0) {\small $v_6$};
  \path[every node/.style={font=\sffamily\small}]
    (1) edge [bend left=20] node {} (2)
    (2) edge [bend left=20] node {} (1)
    (2) edge [bend left=20] node {} (3)
    (3) edge [bend left=20] node {} (2)
    (3) edge [bend left=20] node {} (4)
    (4) edge [bend left=20] node {} (3)
    (4) edge [bend left=20] node {} (1)
    (1) edge [bend left=20] node {} (4)
    (3) edge node {} (5)
    (4) edge node {} (6)
    (5) edge [bend left=20] node {} (6)
    (6) edge [bend left=20] node {} (5);
\end{tikzpicture}
\hspace{1.5cm}
\begin{tikzpicture}[->,>=stealth',auto,scale=1.35,
                    thick,every node/.style={circle,draw,font=\sffamily\small}]
  \node[label=below:0] (1) at (-1, 0) {\small $v_1$};
  \node[label=above:0] (2) at (-1, 1) {\small $v_2$};
  \node[label=above:1] (3) at (0, 1) {\small $v_3$};
  \node[label=below:1] (4) at (0, 0) {\small $v_4$};
  \node[label=above:1] (5) at (1, 1) {\small $v_5$};
  \node[label=below:0] (6) at (1, 0) {\small $v_6$};
  \path[every node/.style={font=\sffamily\small}]
    (1) edge [bend left=20] node {} (2)
    (2) edge [bend left=20] node {} (1)
    (2) edge [bend left=20] node {} (3)
    (3) edge [bend left=20] node {} (2)
    (3) edge [bend left=20] node {} (4)
    (4) edge [bend left=20] node {} (3)
    (4) edge [bend left=20] node {} (1)
    (1) edge [bend left=20] node {} (4)
    (3) edge node {} (5)
    (4) edge node {} (6)
    (5) edge [bend left=20] node {} (6)
    (6) edge [bend left=20] node {} (5);
\end{tikzpicture}
\label{fig::example}
\end{center}
\caption{The chip-distributions $x$ and $y$ on $G$}
\end{figure}

It is easy to see that $y$ is recurrent, since firing $v_5$ then firing $v_6$ transforms it back to itself.
Also, for the reduced $f=(1,1,0,0,0,0)$, $y=x+Lf$.
However, $x\not\leadsto y$, as for $x\leadsto y$ we need to be able to fire the firing vector $f$. However, neither $v_1$ nor $v_2$ can fire in $x$. 
\end{exmp}

\begin{remark} We note that our definition of recurrence is analogous to the so-called ``wrong definition of recurrence'' from \cite{Babai10}. 
Another possibility to define recurrence could be: $x$ is recurrent if for any chip-distribution $x'$ such that $x\leadsto x'$, we have $x'\leadsto x$.
However, we can see from Example \ref{e:recurr}, that on general digraphs, Theorem \ref{thm::recurrent_reachability} does not remain true for this notion, either. This suggests that the right notion of recurrence for general digraphs should be "recurrent restricted to each strongly connected component".
\end{remark}

\section{Open questions and related problems}
\label{s:open_prob}

The most intriguing open question in the area is the complexity of the reachability problem on general digraphs.
An interesting special case of this problem is deciding whether a chip-distribution on a general digraph is recurrent.

\begin{prob}
Let $G$ be a digraph and $x,y \in \Chip(G)$. What is the complexity of deciding whether $x \leadsto y$?
\end{prob}
\begin{prob}
Let $G$ be a digraph. What is the complexity of deciding whether a chip-distribution $x \in \Chip(G)$ is recurrent?
\end{prob}

We conjecture that both of these questions are \coNP-hard.

A related problem to the chip-firing reachability problem is the so-called chip-firing halting problem.

\begin{bekezdes}{Chip-firing halting problem} Given a digraph $G$ and a chip-distribution $x\in \Chip(G)$, decide if $x$ is terminating.
\end{bekezdes}

Informally, the halting problem and the reachability problem are both about determining the firing vector of a maximal game, only this game is a chip-firing game for the halting problem, and a bounded chip-firing game for the reachability problem.

The halting problem is known to be in \textbf{P} for simple Eulerian digraphs \cite{BL92}, and it is known to be \NP-complete for general digraphs \cite{farrell-levine-coeulerian}.
The complexity of the problem is open both for simple digraphs, and for Eulerian digraphs.
We point out the following:

\begin{prop}
The chip-firing halting problem is in \coNP for Eulerian digraphs.
\end{prop}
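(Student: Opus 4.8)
The plan is to exhibit a short certificate for non-termination, i.e.\ to show that if a chip-distribution $x$ on an Eulerian digraph $G$ is non-terminating, then there is a polynomial-size witness of this fact that can be checked in polynomial time. The natural candidate for such a witness is a nonempty set $S \subseteq V$ together with a guarantee that, by firing every vertex of $S$ exactly once (in some legal order) from $x$, we can reach a distribution $x'$ with $x'(v) \ge x(v)$ for all $v \in S$ and such that $S$ remains fireable; this would let us loop forever. More precisely, the certificate I would use is a firing vector of the form $\mathbf{1}_S$ and the claim that the maximal bounded chip-firing game from $x$ with upper bound $\mathbf{1}_S$ has firing vector exactly $\mathbf{1}_S$. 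By Lemma~\ref{l:korlatos_jatek_moho} this maximal bounded game is well-defined, and running it greedily takes only $O(|S|)$ firings, each computable in $O(|V|)$ steps, so the verification is polynomial. Once all of $S$ has fired once, the Eulerian condition guarantees (exactly as in the proof in Section~\ref{s:Euler_multi}, since $L\mathbf{1}_S$ is nonnegative on $S$) that the resulting distribution dominates $x$ on $S$, so the same firings are legal again, and iterating shows $x$ is non-terminating.

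The heart of the argument is the converse: if $x$ is non-terminating, then such an $S$ exists. Here I would invoke the theory of period vectors. Since $x$ is non-terminating, by Theorem~\ref{t:chip-firing_commutative} every maximal legal game is infinite; take a sufficiently long legal game and let $\alpha$ be an initial segment in which every vertex that occurs at all occurs at least once, and moreover which is long enough that its firing vector is $\ge \mathbf{1}_V$ — wait, that need not happen if some vertex never fires, so instead let $S$ be the set of vertices that fire infinitely often in an infinite legal game. One checks $S \ne \emptyset$. Now restrict attention to a tail of the game in which only vertices of $S$ fire; along this tail each vertex of $S$ fires arbitrarily many times, so by Lemma~\ref{l:per_vektor_kihagyhato} applied with the period vector $\mathbf{1}_S$ of the (Eulerian) subdigraph — here one has to be slightly careful, as $\mathbf{1}_S$ need not be a period vector of $G$ — we can extract a legal sub-game with firing vector $\mathbf{1}_S$. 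The cleanest route: look at a long enough legal game so that its firing vector $f$ satisfies $f \ge \mathbf{1}_S$ for $S = \{v : f(v) \ge 1, \text{ and in fact can be made} \ge 1 \text{ repeatedly}\}$, and peel off occurrences using Lemma~\ref{l:per_vektor_kihagyhato} as in the proof of Lemma~\ref{l:growing_sets}, arriving at the statement that $\mathbf{1}_S$ is a legal firing vector from $x$ and the maximal bounded game with this bound achieves it.

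The main obstacle I anticipate is the bookkeeping around which set $S$ to pick and justifying that it can be fired \emph{completely} (i.e.\ that the maximal bounded game with bound $\mathbf{1}_S$ really has firing vector $\mathbf{1}_S$, not something smaller) — the subtlety being that $\mathbf{1}_S$ is not in general a period vector of $G$, so Lemma~\ref{l:per_vektor_kihagyhato} does not apply verbatim and one must instead work inside the Eulerian sub-digraph induced by $S$ (where $\mathbf{1}_S$ \emph{is} a period vector by Proposition~\ref{prop::period}), paying attention to the chips flowing into $S$ from outside, which by the Eulerian hypothesis can only help. Once that is pinned down, everything else — the polynomial-time checkability of the three conditions, and the "loop forever" argument — is routine and parallels the proof of the $\coNP$ membership of the reachability problem given earlier in this section.
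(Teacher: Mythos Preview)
Your certificate does not work, and the error is a sign mistake at the very point you flag as routine. For $v\in S$ we have
\[
(L\mathbf{1}_S)(v)=-d^+(v)+\sum_{u\in S\setminus\{v\}}\overrightarrow{d}(u,v)\le -d^+(v)+d^-(v)=0,
\]
so firing $S$ once \emph{decreases} (or leaves unchanged) the number of chips on every vertex of $S$; this is exactly the inequality the paper uses in Section~\ref{s:Euler_multi}, but in the opposite direction from what you claim. Consequently, being able to fire $S$ once from $x$ gives no guarantee that $S$ can be fired again, and indeed it need not imply non-termination at all: on the bidirected triangle with $x=(2,0,0)$ the set $S=\{v_1\}$ can be fired once, after which the game terminates. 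The only $S$ for which $L\mathbf{1}_S=\mathbf{0}$ on $S$ is $S=V$ (since a connected Eulerian digraph is strongly connected), and then your certificate amounts to ``$x$ is recurrent''. But non-terminating distributions need not be recurrent: on the bidirected triangle, $x=(4,0,0)$ is non-terminating, yet from $x$ only $v_1$ can fire, so the maximal bounded game with bound $\mathbf{1}_V$ has firing vector $\mathbf{1}_{\{v_1\}}\ne\mathbf{1}_V$.

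The paper's proof uses a genuinely different certificate: a \emph{recurrent} distribution $y$ with $x\sim y$. Existence of such a $y$ when $x$ is non-terminating follows from a pigeonhole argument (some distribution is visited twice along an infinite legal game), and the converse---that $y$ recurrent and $x\sim y$ force $x$ to be non-terminating---is obtained by invoking the lemma of Bond and Levine that linear equivalence preserves the terminating/non-terminating dichotomy on strongly connected digraphs. Both recurrence of $y$ and $x\sim y$ are polynomially checkable in the Eulerian case. Your proposal does not find this $y$; the ``peeling'' you sketch via Lemma~\ref{l:per_vektor_kihagyhato} produces legal games from $x$ with small firing vectors, not a distribution that is itself recurrent, and as the example above shows no choice of $S$ repairs this.
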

\begin{proof}
Our certificate for ``$x$ is non-terminating'' is a recurrent chip-distribution $y$ such that $x\sim y$. Since the graph is Eulerian, both the fact that $y$ is recurrent, and that $x \sim y$, can be checked in polynomial time. 
  
We show that $x$ is non-terminating if and only if such a $y$ exists. 
The number of chip-distributions reachable from $x$ by a legal game is finite (on each vertex, the number of chips needs to be between $0$ and $\sum_{v\in V}x(v)$). Hence if $x$ is nonterminating, starting a legal chip-firing game from $x$, we will eventually visit some chip-distribution $y$ twice. This $y$ is therefore recurrent. Moreover, $y\sim x$, since $y=x+Lf$ for the firing vector of the game leading from $x$ to $y$. 

For the other direction, we use a lemma of Bond and Levine \cite[Lemma 4.3.]{Bond-Levine}: if for two chip-distributions $x$ and $y$ on a strongly connected digraph $G$, $x\sim y$, then $x$ is terminating if and only if $y$ is terminating. Note that now $G$ is strongly connected since it is connected and Eulerian.
Note also that a recurrent chip-distribution is always non-terminating, since we can repeat the nonempty legal game transforming it back to itself indefinitely. Hence $y$ is non-terminating, consequently, $x$ is non-terminating.
\end{proof}

This means, that for Eulerian digraphs, the chip-firing halting problem is in $\NP\cap\coNP$.

\begin{prob}
Is there a polynomial time algorithm that decides the chip-firing halting problem for Eulerian digraphs (with multiple edges possible)?
\end{prob}

\section*{Acknowledgements}

We would like to thank Andr\'as Frank and \'Agoston Weisz for calling our attention to reachability questions and the definition of recurrent distributions. Investigations  of \'Agoston in some special cases \cite{weisz} were also very helpful for us.

\end{document}